\newtheorem{thm}{Theorem}
\newtheorem{lemma}[thm]{Lemma}
\theoremstyle{remark}
\theoremstyle{definition}
\newcommand{\incfree}{\bar{\alpha}}
\newcommand{\isop}{i_V}
\newcommand{\vkl}[1]{$(v,k,\lambda)$}
\definecolor{VeryLightBlue}{rgb}{0.9,0.9,1}
\definecolor{LightBlue}{rgb}{0.8,0.8,1}
\definecolor{MidBlue}{rgb}{0.5,0.5,1}
\definecolor{DarkBlue}{rgb}{0,0,0.6}
\definecolor{Blue}{rgb}{0,0,1}
\definecolor{Gold}{rgb}{1,0.843,0}
\definecolor{LightGreen}{rgb}{0.88,1,0.88}
\definecolor{MidGreen}{rgb}{0.6,1,0.6}
\definecolor{DarkGreen}{rgb}{0,0.6,0}
\definecolor{VeryLightYellow}{rgb}{1,1,0.9}
\definecolor{LightYellow}{rgb}{1,1,0.6}
\definecolor{MidYellow}{rgb}{1,1,0.5}
\definecolor{DarkYellow}{rgb}{1,1,0.2}
\definecolor{DarkPurple}{rgb}{.6,0,1}
\definecolor{Red}{rgb}{1,0,0}
\definecolor{VeryLightRed}{rgb}{1,0.9,0.9}
\definecolor{LightRed}{rgb}{1,0.8,0.8}
\definecolor{MidRed}{rgb}{1,0.55,0.55}
\newcommand{\be}{\begin{equation}}
\newcommand{\ee}{\end{equation}}
\newcommand{\bea}{\begin{eqnarray}}
\newcommand{\eea}{\end{eqnarray}}
\newcommand{\bean}{\begin{eqnarray*}}
\newcommand{\eean}{\end{eqnarray*}}
\def\PG{\mathrm{PG}}
\def\qed{\hfill$\Box$\vspace{12pt}}
\title{The isoperimetric number of the incidence graph of $\PG(n,q)$}
\author{Andrew Elvey-Price\thanks{E-mail: a.elveyprice@student.unimelb.edu.au},\ Muhammad Adib Surani\thanks{E-mail: m.surani@student.unimelb.edu.au}\; and Sanming Zhou\thanks{E-mail: sanming@unimelb.edu.au}\\ \\
	School of Mathematics and Statistics\\
	The University of Melbourne\\
	Parkville, VIC 3010, Australia}
\begin{document}
	\openup 0.5\jot 
	\maketitle
	
	\begin{abstract}
Let $\Gamma_{n,q}$ be the point-hyperplane incidence graph of the projective space $\PG(n,q)$, where $n \ge 2$ is an integer and $q$ a prime power. We determine the order of magnitude of $1-\isop(\Gamma_{n,q})$, where $\isop(\Gamma_{n,q})$ is the vertex-isoperimetric number of $\Gamma_{n,q}$. We also obtain the exact values of $\isop(\Gamma_{2,q})$ and the related incidence-free number of $\Gamma_{2,q}$ for $q \le 16$.

\medskip

{\it Keywords:}~ isoperimetric number; vertex-isoperimetric number; incidence-free number; projective plane; projective space  

{\it AMS subject classification (2010):}~ 05C40, 05B25
	\end{abstract}

\section{Introduction}
\label{sec:int}

A fundamental problem in graph theory is to understand various expansion properties of graphs. The expansion of a graph is commonly measured by its isoperimetric number, also known as the Cheeger constant, or its vertex-isoperimetric number. These two parameters have been studied extensively, especially in the study of expanders, and a number of results on them exist in the literature (see for example \cite{HLW2006}). A major concern is to produce good (sharp) lower bounds for these isoperimetric numbers and related invariants. Such isoperimetric inequalities are closely related to problems in probabilistic combinatorics, theoretical computer science, spectral graph theory, etc.  

Inspired by a conjecture of Babai and Szegedy, in this paper we study the vertex-isoperimetric number of the point-hyperplane incidence graph of the projective space $\PG(n, q)$.

Let $\Gamma=(V,E)$ be a graph. The \emph{vertex-boundary} $N(X)$ of a subset $X \subseteq V$ is the set of vertices in $V \setminus X$ that are adjacent to at least one vertex in $X$. The \emph{vertex-isoperimetric number} of $\Gamma$ is defined \cite{HLW2006} as
	\be
	\label{eq:iv}
	\isop(\Gamma) = \min \left\{\frac{|N(X)|}{|X|} : \emptyset \neq X \subseteq V, |X| \leq \frac{|V|}{2} \right\}.
	\ee
The problem of determining the vertex-isoperimetric number of a graph is known to be NP-complete. There are very few families of graphs whose vertex-isoperimetric numbers have been computed exactly (see e.g. \cite{Harper99}). The reader is referred to \cite{Harper2004} for the history of this problem and related results. The related problem of determining $\min \{|N(X)|: \emptyset \neq X \subseteq V\}$ has also been studied extensively (\cite{Harper2004, Leader91}); see, for example, \cite{Harper66} for Harper's classical result on this problem for hypercubes and \cite{BL90} for an isoperimetric inequality for the discrete torus. 

As pointed out in \cite{harper1994isoperimetric}, many isoperimetric problems can be put into the form for bipartite graphs. In this case a closely related parameter is as follows. 
	Let $\Gamma$ be a bipartite graph with bipartition $\{V_1, V_2\}$ such that $|V_1| = |V_2|$. If $S \subseteq V_1$ and $T \subseteq V_2$ are such that $|S|=|T|$ and there is no edge of $\Gamma$ between $S$ and $T$, then $(S, T)$ is called an \emph{incidence-free pair}. The \emph{incidence-free number} of $\Gamma$, first introduced in \cite{de2012large} and denoted by $\incfree(\Gamma)$, is defined to be the maximum size of $S$ among all incidence-free pairs $(S, T)$. That is,  
	$$\incfree(\Gamma) = \max_{S \subseteq V_1} \min\{ |S|, |V_2 \setminus N(S)| \}.$$
	This parameter is particularly useful for bounding $\isop(\Gamma)$ for bipartite graphs $\Gamma$. Indeed, for any incidence-free pair $(S, T)$ with $|S| = \incfree(\Gamma)$, by setting $X = S \cup (V_2 \setminus T)$ in \eqref{eq:iv} we obtain
	\begin{equation}\label{eq:incfreeisop}
	i_V(\Gamma) \leq 1 - \frac{\incfree(\Gamma)}{|V_1|}.
	\end{equation}

 		The \emph{incidence graph} (or Levi graph) \cite{Beth-Jung-Lenz} of a $2$-$(v, k, \lambda)$ design $D$ is the bipartite graph with one part of the bipartition consisting of the points of $D$ and the other part the blocks of $D$ such that a point is adjacent to a block if and only if they are incident in $D$. Obviously, if $D$ is a symmetric design, then its incidence graph is a $k$-regular bipartite graph with $v$ vertices in each part such that any two vertices in the same part have exactly $\lambda$ common neighbours in the other part. Conversely, any $k$-regular bipartite graph with these properties is isomorphic to the incidence graph of a symmetric $2$-$(v, k, \lambda)$ design. Such a graph $\Gamma$ is called a \emph{$(v,k,\lambda)$-graph} and its bipartition is denoted by $\{V_1(\Gamma), V_2(\Gamma)\}$. We require $k$ to be a positive integer but we allow the degenerate case $\lambda=0$ for which the graph is a perfect matching. It is well known \cite{Beth-Jung-Lenz} that the parameters $(v,k,\lambda)$ for a symmetric design satisfy 
	\begin{equation}
	\label{eq:vkl}
	\lambda(v-1) = k(k-1).
	\end{equation}

Throughout the paper we use $\Gamma_{n,q}$ to denote the incidence graph of the point-hyperplane design of the projective space $\PG(n,q)$, where $n$ is a positive integer and $q$ a prime power. More explicitly, let $V_1$ and $V_2$ be the sets of 1-dimensional and $n$-dimensional subspaces of $\mathbb{F}_q^{n+1}$ respectively, where $\mathbb{F}_q$ is the finite field of order $q$. $\Gamma_{n,q}$ is the bipartite graph with bipartition $\{V_1, V_2\}$ and adjacency relation giving by subspace containment. Alternatively, we can write 
	\begin{gather*}
	V_1 = \{ \left<u\right> : u \in \mathbb{F}_q^{n+1}\},\;\,
	V_2 = \{ \left<v\right>^\perp : v \in \mathbb{F}_q^{n+1}\}\\
	\left<u\right> \mbox{ and } \left<v\right>^\perp \mbox{ are adjacent in } \Gamma_{n,q} \mbox{ if and only if } u \cdot v = 0.
	\end{gather*}
Since $\PG(n,q)$ is a symmetric $2$-$\left(\frac{q^{n+1}-1}{q-1}, \frac{q^{n}-1}{q-1}, \frac{q^{n-1}-1}{q-1}\right)$ design \cite{Beth-Jung-Lenz}, it follows that $\Gamma_{n,q}$ is a $\left(\frac{q^{n+1}-1}{q-1}, \frac{q^{n}-1}{q-1}, \frac{q^{n-1}-1}{q-1}\right)$-graph. It is readily seen that $\Gamma_{n,q}$ has diameter $2$.
	
	%Let $n$ be a positive integer and $q$ a prime power. We define the point-hyperplane design of $\PG(n,q)$ to have the points of $\PG(n,q)$ as its point set and the hyperplanes of $\PG(n,q)$ as its block set. [cite]. We will use the construction of $\PG(n,q)$ as subspaces of $\mathbb{F}_q^{n+1}$, so that the points and hyperplanes of $\PG(n,q)$ as 1-dimensional and n-dimensional subspaces of $\mathbb{F}_q^{n+1}$ respectively. We define $\Gamma_{n,q}$ to be the incidence graph of this design.
	
	%In general, the problem of finding the vertex-isoperimetric number of a graph is known to be NP-complete, but there are some families for which its exact value is known [cite]. Strong bounds are known for even more families [cite]. We refer the reader to [cite multiple papers] for the history of this problem.

Considerable interest in $\Gamma_{n,q}$ arises from algebraic graph theory and finite geometry. For example, it is known \cite{DMM08} that these graphs form a major subfamily of the family of $2$-arc transitive Cayley graphs of dihedral groups. (A graph is $2$-arc transitive if its automorphism group is transitive on the set of oriented paths of length $2$.) In \cite{BS92}, Babai and Szegedy conjectured that there is a positive absolute constant $c$ such that any finite $2$-arc transitive graph with diameter $d$ has vertex-isoperimetric number at least $c/\sqrt{d}$. They wrote further that ``it would be interesting to find reasonable symmetry conditions which would imply an expansion rate of $\Omega(1/\sqrt{d})$". The main result in the present paper (Theorem \ref{thm:order} below) is in line with this conjecture and provides a new family of symmetric graphs with expansion rate at least $\Omega(1/\sqrt{d})$.

As noted in \cite{ure1996study}, $\isop(\Gamma_{2,q})$ is closely related to arcs in the projective plane $\PG(2,q)$. Given integers $k, d > 1$, a \emph{$(k; d)$-arc} in $\PG(2,q)$ is a set of $k$ points, of which no $d+1$ are collinear. It is known that $k \le (d-1)(q+1) + 1$ for any $(k; d)$-arc in $\PG(2,q)$; a $(k; d)$-arc is \emph{maximal} if equality holds. A $(k; 2)$-arc is usually called a \emph{$k$-arc}.

Several results on $i_V(\Gamma_{n,q})$ and related problems exist in the literature. Harper and Hergert \cite{harper1994isoperimetric} and Ure \cite{ure1996study} studied the related problem of finding the minimum $|N(X)|$ for a subset $X$ of points with a given size in the projective plane $\PG(2,q)$. In  \cite{lanphier2006expansion}, Lanphier et al. studied the isoperimetric number of $\Gamma_{n,q}$. In \cite{mubayi2007independence}, Mubayi and Williford studied the independence number of the quotient of $\Gamma_{n,q}$ with respect to the partition each of whose part consists of a point of $\PG(n, q)$ and its dual hyperplane. De Winter et al. \cite{de2012large} and Stinson \cite{stinson2013nonincident} studied the incidence-free number of $\Gamma_{n,q}$. 

Determining the precise value of $\isop(\Gamma_{n,q})$ turns out to be a very challenging problem, even in the case when $n=2$ and $q$ is small. In this paper we will first prove the following bounds for $i_V(\Gamma_{n,q})$ and thus determine the order of magnitude of $1-\isop(\Gamma_{n,q})$. We will then determine the exact values of $i_V(\Gamma_{2,q})$ for all prime powers $q \le 16$.
		
	\begin{thm}
		\label{thm:order}
		Let $n\geq 2$ be an integer, $q=p^e$ a prime power and $\epsilon > 0$ a real number with $0 < \epsilon < \frac{1}{4}$. Then
		\begin{equation}
		\label{eq:order}
		\isop(\Gamma_{n,q})=1-c_{n,q}\frac{q^\frac{n+1}{2}(q-1)}{q^{n+1}-1}
		\end{equation}
		for some real number $c_{n,q}$ with $\frac{1}{2} - O(p^{\epsilon-\frac{1}{4}}) \le c_{n,q} < 1$.  
	\end{thm}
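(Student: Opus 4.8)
The plan is to translate the two bounds on $c_{n,q}$ into bounds on $\isop(\Gamma_{n,q})$. Writing $v=\frac{q^{n+1}-1}{q-1}$ for the common part size, so that $\frac{q^{(n+1)/2}(q-1)}{q^{n+1}-1}=q^{(n+1)/2}/v$, the claim $c_{n,q}<1$ is exactly $\isop(\Gamma_{n,q})>1-q^{(n+1)/2}/v$, while $c_{n,q}\ge\frac12-O(p^{\epsilon-1/4})$ is exactly $\isop(\Gamma_{n,q})\le 1-(\frac12-O(p^{\epsilon-1/4}))\,q^{(n+1)/2}/v$. I would establish the first (a lower bound on $\isop$) by a spectral/counting argument, and the second (an upper bound on $\isop$) by constructing a large incidence-free pair and applying \eqref{eq:incfreeisop}.

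For the lower bound I would first record that $\Gamma_{n,q}$ has adjacency eigenvalues $\pm k$ and $\pm\sigma$ with $\sigma=\sqrt{k-\lambda}=q^{(n-1)/2}$, together with the identities $v=qk+1$ and $v-k=q^{n}$. Given $X\subseteq V$ with $|X|\le v$, write $X=A\cup B$ with $A=X\cap V_1$, $B=X\cap V_2$, and let $Z_2\subseteq V_2\setminus B$ and $Z_1\subseteq V_1\setminus A$ be the vertices having no neighbour in $X$, so that $|N(X)|=2v-|X|-|Z_1|-|Z_2|$; the binding case is $|X|=v$, where $|Z_2|\le|A|$ and $|Z_1|\le|B|$. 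Since $(A,Z_2)$ and $(Z_1,B)$ are incidence-free pairs, the sharp form of the expander mixing lemma gives $|Z_1|,|Z_2|\le \sigma v/(k+\sigma)$, which using $v=qk+1$ is strictly below $q^{(n+1)/2}$. The decisive step is a balancing estimate: if $|Z_2|$ is near this maximum then $|A|$ is forced to be of order $q^{(n+1)/2}$ as well, so $N(Z_1)$, which lies in $V_2\setminus B$ of size $|A|$, is too constrained to be large, and the same mixing inequality then bounds $|Z_1|$ by only $O(q)$; running this trade-off across all sizes of $A$ yields $|Z_1|+|Z_2|<q^{(n+1)/2}$, hence $\isop(\Gamma_{n,q})>1-q^{(n+1)/2}/v$.

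For the upper bound I would pass to the Singer model. Fixing a Singer generator $g$ of $\mathbb F_{q^{n+1}}^{*}$ and using the trace form $\mathrm{Tr}_{\mathbb F_{q^{n+1}}/\mathbb F_q}$ as the bilinear form, both points and hyperplanes are indexed by $\mathbb Z_v$, and $\langle g^i\rangle$ is incident to $\langle g^j\rangle^{\perp}$ precisely when $i+j\in D$, where $D=\{m:\mathrm{Tr}(g^m)=0\}$ is a $(v,k,\lambda)$ Singer difference set. An incidence-free pair then corresponds exactly to sets $I,J\subseteq\mathbb Z_v$ with $(I+J)\cap D=\emptyset$, and by \eqref{eq:incfreeisop} the whole bound reduces to producing such $I,J$ with $|I|=|J|\ge(\frac12-O(p^{\epsilon-1/4}))q^{(n+1)/2}$. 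A purely random choice fails (once $|I|\gtrsim q^{(n+1)/2}$ the set $D-I$ already covers $\mathbb Z_v$), so I would take $I,J$ with explicit multiplicative/additive structure and verify both the avoidance and the size by expanding $\mathbf 1_D$ into additive characters: since $D$ is a difference set, every nontrivial Fourier coefficient of $\mathbf 1_D$ has modulus $\sigma=q^{(n-1)/2}$, and the relevant sums reduce to Weil/Kloosterman sums over the prime field $\mathbb F_p$. The main term yields the constant $\frac12$, while the bound $\sqrt p$ together with a divisor bound $p^{\epsilon}$ on the number of representations accounts for the characteristic-dependent error $O(p^{\epsilon-1/4})$.

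The main obstacle is this construction. The spectral bound forces every incidence-free pair below $q^{(n+1)/2}$, so reaching a constant fraction of it is the best one can hope for, but the failure of the random construction shows that one must exploit the fine additive structure of the Singer difference set, and extracting exactly the constant $\frac12$ together with the sharp, characteristic-governed error is where essentially all the difficulty lies. By contrast the balancing estimate in the lower-bound direction is delicate only because the deficit $q^{(n+1)/2}-\sigma v/(k+\sigma)$ and the residual $|Z_1|$ are both of order $q$, so the constants must be tracked with care; otherwise it is routine manipulation of the expander mixing lemma.
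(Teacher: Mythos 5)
Your overall architecture matches the paper's (a direct lower bound for $\isop$, and an upper bound via a large incidence-free pair fed into \eqref{eq:incfreeisop}), and your spectral treatment of the lower bound is essentially equivalent to the paper's counting route: the mixing-lemma bound for incidence-free pairs has the same strength as the second bound in Lemma \ref{lem:onesidedbound} (which the paper even sharpens by an integrality trick via $m$), and your ``balancing estimate'' between $|Z_1|$ and $|Z_2|$ is exactly what Theorem \ref{thm:f} formalises through the concavity of $f(x)=\frac{k^2x}{k+\lambda(x-1)}+x$. Two caveats there. First, your assertion that $|X|=v$ is the binding case is not free; the paper derives it from $f(\alpha x)\ge \alpha f(x)$ and $f^{-1}(\alpha x)\le\alpha f^{-1}(x)$ in Theorem \ref{thm:kmubound}. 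Second, and more substantively, the strict inequality $c_{n,q}<1$ does not fall out of the generic estimate when $(n,q)\in\{(2,2),(2,3)\}$: the paper's Case 3 argument needs $q^{n-1}\ge 4$, and the case $(2,2)$ even requires the integer rounding $|N(X)|\ge\left\lceil f\left(7-f^{-1}(7)\right)-7\right\rceil=5$. Since, as you note, your deficit $q^{(n+1)/2}-\sigma v/(k+\sigma)$ and your residual are both of order $q$, at $q=2,3$ the constants you defer genuinely decide the outcome, so your sketch is incomplete exactly where it is tightest.

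The serious gap is the upper bound, which is the heart of the theorem. You correctly reduce to exhibiting an incidence-free pair of size $\left(\frac12-O(p^{\epsilon-\frac14})\right)q^{\frac{n+1}{2}}$ and correctly observe that random sets stall at this size, but you then propose no actual construction: ``explicit multiplicative/additive structure'' plus Weil/Kloosterman estimates is a hope, not a proof, and it runs into the obstruction you yourself identify --- at the extremal size the character-sum error term $\sigma\sqrt{|I||J|}$ is of the same order as the main term $k|I||J|/v$, so Fourier pseudorandomness can never certify that the number of incidences is \emph{exactly} zero. The paper's construction avoids cancellation entirely: for $n=2$ and $q=p$ prime it takes $S=\left\{\langle(x,y,1)\rangle : x^2+y^2\le \frac{p\sqrt p}{2\pi}\right\}$ and $T=\left\{\langle(a,b,c)\rangle^\perp : \left|c-\frac p2\right|<\frac p2-\frac{p^{3/4}}{\sqrt{2\pi}}\sqrt{a^2+b^2}\right\}$, so that $0<ax+by+c<p$ holds pointwise by Cauchy--Schwarz (integer dot products too small to wrap around mod $p$), with the sizes $\frac{p\sqrt p}{2}-O\left(p^{\frac54+\epsilon}\right)$ supplied by the Gauss circle problem and its primitive variant (Lemma \ref{Cn}); the exponent $\epsilon-\frac14$ is the circle-problem error, not a divisor bound. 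The general case then follows not by a direct Singer-model argument for all $n$, but by the lifting inequalities of Lemma \ref{lem:pq} --- $\incfree(\Gamma_{n+2,q})\ge q\,\incfree(\Gamma_{n,q})$ together with the prime-power reductions (c) and (d) --- which your proposal also omits. Without a concrete pair $(S,T)$ the upper bound, and hence the theorem, is not proved.
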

	
The upper bound $c_{n,q} < 1$ is best possible. Indeed, due to the existence of Denniston maximal arcs \cite{denniston1969some} in $\PG(2,2^{2k})$, we necessarily have that $\incfree(\Gamma_{2,2^{2k}}) = 2^{3k}-2^{2k}+2^k$, so $c_{2,2^{2k}}$ can be forced arbitrarily close to 1 for sufficiently large $k$. We suspect that the lower bound for $c_{n,q}$ can be improved to $\frac{1}{2}$ without the need for an error term.

Theorem \ref{thm:order} will be proved in the next two sections: In section \ref{sec:lb} we give a lower bound for $\isop(\Gamma)$ for any $(v, k, \lambda)$-graph $\Gamma$ and use it to prove the lower bound for $\isop(\Gamma_{n,q})$ as given in \eqref{eq:order}. In section 3 we obtain a lower bound for $\incfree(\Gamma_{n,q})$ and thus the required upper bound in \eqref{eq:order} by using \eqref{eq:incfreeisop}.

As far as we know, no exact value of $\isop(\Gamma_{2,q})$ is known even for small $q$, and the exact value of $\incfree(\Gamma_{2,q})$ is known \cite{ure1996study} only for $q \in \{2, 3, 4, 5, 7\}$. The next result gives the exact values of $\incfree(\Gamma_{2,q})$ for $q \in \{8, 9, 11, 13, 16\}$ and $\isop(\Gamma_{2,q})$ for all prime powers $q \le 16$. We will prove this result in section \ref{sec:small}. 

	\begin{thm}
		\label{thm:16}
		Let $q \le 16$ be a prime power. Then the values of $i_V(\Gamma_{2,q})$ and $\incfree(\Gamma_{2,q})$ are as given in Table \ref{tab:small}. Moreover, the equality in (\ref{eq:incfreeisop}) holds for $\Gamma_{2,q}$. That is,
		\begin{equation}
		\label{eq:16}	
		i_V(\Gamma_{2,q}) = 1 - \frac{\incfree(\Gamma_{2,q})}{q^2+q+1}.
		\end{equation} 
	\end{thm}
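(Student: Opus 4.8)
The plan is to establish Theorem~\ref{thm:16} in two stages: first compute $\incfree(\Gamma_{2,q})$ exactly for each $q \le 16$, and then show that the isoperimetric bound \eqref{eq:incfreeisop} is tight, so that $\isop(\Gamma_{2,q})$ follows immediately from the computed incidence-free numbers. For the values $q \in \{2,3,4,5,7\}$ the incidence-free number is already recorded in \cite{ure1996study}, so the genuinely new work is the determination of $\incfree(\Gamma_{2,q})$ for $q \in \{8,9,11,13,16\}$, and then upgrading each inequality in \eqref{eq:incfreeisop} to an equality across the whole range.

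First I would pin down $\incfree(\Gamma_{2,q})$ by combining lower bounds from explicit constructions with matching upper bounds. For the lower bounds I would exhibit concrete incidence-free pairs $(S,T)$: since a point set $S$ and a hyperplane (line) set $T$ form an incidence-free pair precisely when no point of $S$ lies on any line of $T$, these correspond naturally to arc-like configurations in $\PG(2,q)$, as flagged in the discussion of $(k;d)$-arcs in the excerpt. In particular the Denniston maximal arcs in $\PG(2,2^{2k})$ give the strong lower bound for $q=4,16$, and for the remaining values I would either adapt known arc constructions or, for the small cases, produce the extremal configurations by a targeted computer search. For the upper bounds — showing no incidence-free pair can be larger — I expect to argue via a counting or eigenvalue (expander-mixing) estimate on $\Gamma_{2,q}$, supplemented by exhaustive or branch-and-bound computation for the small primes $q \in \{8,9,11,13\}$ where the parameter space is tractable. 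The key relations \eqref{eq:vkl} and the design parameters $v=q^2+q+1$, $k=q+1$, $\lambda=1$ feed directly into these counting bounds.

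Next I would prove equality in \eqref{eq:incfreeisop}. The upper bound $\isop(\Gamma_{2,q}) \le 1 - \incfree(\Gamma_{2,q})/(q^2+q+1)$ is automatic from the construction $X = S \cup (V_2 \setminus T)$ given in the excerpt. The content is the reverse inequality: I must show that no set $X$ with $|X| \le |V|/2$ achieves a smaller ratio $|N(X)|/|X|$ than the one coming from a maximum incidence-free pair. The natural strategy is to take an arbitrary $X$ meeting the minimum in \eqref{eq:iv}, split it as $X = (X \cap V_1) \cup (X \cap V_2)$, and analyze $N(X)$ within each part of the bipartition separately; one then wants to show that any improvement over the incidence-free bound would produce an incidence-free pair exceeding $\incfree(\Gamma_{2,q})$, contradicting the value just computed. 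Here the lower-bound machinery for $\isop$ of a general $(v,k,\lambda)$-graph promised for Section~\ref{sec:lb} should do much of the heavy lifting, reducing the equality to a check that the general lower bound and the construction-based upper bound coincide for these specific parameters.

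The main obstacle, I expect, is the upper-bound half of the incidence-free computation for the intermediate values $q \in \{8,9,11,13\}$, where no clean algebraic extremal configuration (of Denniston type) is available and a purely combinatorial counting bound is unlikely to be sharp. For these cases the proof will almost certainly rely on an exhaustive computer search exploiting the large automorphism group $\mathrm{P\Gamma L}(3,q)$ of $\Gamma_{2,q}$ to prune the search space, and the delicate part is justifying that the search is genuinely exhaustive up to symmetry. A secondary difficulty is ensuring that equality in \eqref{eq:incfreeisop} holds \emph{uniformly} for every $q \le 16$ rather than case by case: I would try to isolate a structural reason — for instance, that an optimal $X$ for $\isop$ can always be taken of the special product form $S \cup (V_2 \setminus T)$ — so that the equality is not merely a numerical coincidence verified per value of $q$.
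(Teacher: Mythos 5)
Your first stage is essentially the paper's: lower bounds for $\incfree(\Gamma_{2,q})$ come from explicit configurations (a Denniston maximal arc for $q=16$, ad hoc point sets for $q=9,11,13$, a recursive construction for $q=8$), and upper bounds come from a second-moment counting estimate (Lemma \ref{lem:ineq-x}, specialised from Lemma \ref{lem:onesidedbound}), which is in fact sharper than the expander-mixing bound you propose. One concrete caveat: for $q=9$ the counting bound only gives $\incfree(\Gamma_{2,9})\le 21$, and the true value $19$ is not reachable by the methods you list --- exhaustive search over $20$-point subsets of the $91$ points of $\PG(2,9)$ is infeasible even with $\mathrm{P\Gamma L}(3,9)$-pruning unless done very carefully. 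The paper instead imports the published classification of $(17;3)$-arcs in $\PG(2,9)$ \cite{coolsaet2012complete}: Lemma \ref{lem:72} checks $|N(S)|\ge 72$ for the four arcs in that classification, and a counting reduction shows any $S$ with $|S|=20$ and $|N(S)|\le 71$ would contain a $(17;3)$-arc with boundary at most $71$, a contradiction.

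The genuine gap is in your second stage, the matching lower bound for $\isop(\Gamma_{2,q})$. Your hope that ``the general lower bound and the construction-based upper bound coincide'' is false: Theorem \ref{thm:kmubound} only yields the bound corresponding to $c_{2,q}<1$ in \eqref{eq:order}, while the true values have $c_{2,q}\approx 0.58$--$0.81$ (Table \ref{tab:small}), so the analytic machinery of Section \ref{sec:lb} is strictly weaker than what is needed. Your fallback --- a structural theorem that an optimal $X$ in \eqref{eq:iv} can be taken of the product form $S\cup(V_2\setminus T)$ --- is proved nowhere in the paper and your proposal offers no route to it; moreover, the argument that ``any improvement produces an oversized incidence-free pair'' breaks down when the minimizing $X$ is unbalanced between $V_1$ and $V_2$, since then no incidence-free pair of equal sizes arises. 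What the paper actually does (Theorem \ref{thm:prog}) is a \emph{relaxation}: it replaces the exponential search over subsets by a six-variable integer program in the part sizes $a,b,c,d,e,f$ of $X\cap V_1$, $X\cap V_2$, $N(X)\cap V_1$, $N(X)\cap V_2$ and the two untouched sets, with constraints coming from Theorem \ref{thm:f}, Lemma \ref{lem:ineq-x}, bespoke incidence counts (every pair in $A\times E$ forces a line in $D$ containing at most $q$ points of $A$, with a refined $2$-arc count when $|E|=2$), a special-case analysis for $q=5$, $a=9$ from \cite{ure1996study}, and the already-computed $\incfree$ values. Enumerating all $(q^2+q+1)^6$ tuples is polynomial in $q$, and for each prime power $q\le 16$ the optimum of the relaxation happens to equal the upper bound $1-\incfree(\Gamma_{2,q})/(q^2+q+1)$, which is how \eqref{eq:16} is certified --- per value of $q$, not uniformly. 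Without this relaxation idea or a substitute for it, your plan has no feasible mechanism for the lower-bound half of the theorem.
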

	
%\sm{I suggest that new values in Table 1 are highlighted in bold font. This will make the contribution of this paper more visible.}
	
	\begin{table}
		\begin{center}
			\caption{The cases when $q \le 16$}
			\bigskip
			\begin{tabular}{|c|c|c|c|}
				\hline $q$ & $\incfree(\Gamma_{2,q})$ & $\isop(\Gamma_{2,q})$ & $c_{2,q}$ \text{ (3 decimal places)} \\ 
				\hline 2 & 2 & \textbf{\nicefrac{5}{7}} & 0.707 \\ 
				\hline 3 & 3 & \textbf{\nicefrac{10}{13}} & 0.577 \\ 
				\hline 4 & 6 & \textbf{\nicefrac{5}{7}} & 0.750 \\ 
				\hline 5 & 7 & \textbf{\nicefrac{24}{31}} & 0.626 \\ 
				\hline 7 & 13 & \textbf{\nicefrac{44}{57}} & 0.702 \\ 
				\hline 8 & \textbf{16} & \textbf{\nicefrac{57}{73}} & 0.707 \\ 
				\hline 9 & \textbf{19} & \textbf{\nicefrac{72}{91}} & 0.703 \\ 
				\hline 11 & \textbf{28} & \textbf{\nicefrac{15}{19}} & 0.767 \\ 
				\hline 13 & \textbf{36} & \textbf{\nicefrac{49}{61}} & 0.768 \\ 
				\hline 16 & 52 & \textbf{\nicefrac{17}{21}} & 0.813 \\ 
				\hline 
			\end{tabular}
			\label{tab:small}
		\end{center}
	\end{table}

In Table \ref{tab:small} new results from this paper are highlighted in bold. We include $c_{2,q}$ in order to estimate how close it is to the given bounds $0.5 \lesssim c_{2,q} < 1$.

	\section{Lower bounds for $i_V(\Gamma_{n,q})$}
	\label{sec:lb}

	Let $\Gamma$ be a $(v,k,\lambda)$-graph. It is known \cite{mullin1975regular} that $|N(S)| \geq \frac{k^2 |S|}{k+\lambda(|S|-1)}$ for any $S \subseteq V_1(\Gamma)$ or $S \subseteq V_2(\Gamma)$. 
	We will prove a stronger bound in the following lemma. This was already known in \cite{ure1996study} for projective planes, and here we generalise it to all symmetric 2-designs.
	\begin{lemma}
		\label{lem:onesidedbound}
		Let $\Gamma$ be a \vkl{}-graph and let $m = \left\lfloor \frac{\lambda(|S|-1)}{k} \right\rfloor + 1$. Then for any non-empty subset $S$ of $V_1(\Gamma)$ or $V_2(\Gamma)$ we have 
		\be
		\label{eq:ure}
		\frac{|N(S)|}{|S|} \geq \frac{2km-\lambda(|S|-1)}{m (m + 1)}
		\geq \frac{k^2}{k+\lambda(|S|-1)}.
		\ee
	\end{lemma}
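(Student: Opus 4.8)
The plan is to reduce the statement to an elementary inequality about a list of positive integers, obtained by a standard double count, and then to extract the sharper first bound purely from integrality. By the symmetry between the two parts I may assume $S \subseteq V_1(\Gamma)$; write $s = |S|$, and for each $w \in N(S)$ (necessarily a vertex of $V_2(\Gamma)$, since $\Gamma$ is bipartite) let $d_w \ge 1$ be the number of neighbours of $w$ that lie in $S$. Counting the edges between $S$ and $N(S)$ in two ways and using $k$-regularity gives $\sum_{w \in N(S)} d_w = ks$. Counting instead the ordered pairs of distinct vertices of $S$ together with a common neighbour, and using that any two vertices of $V_1(\Gamma)$ have exactly $\lambda$ common neighbours, gives $\sum_{w \in N(S)} d_w(d_w-1) = \lambda s(s-1)$, whence $\sum_{w \in N(S)} d_w^2 = ks + \lambda s(s-1)$. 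Abbreviating $K = ks$ and $Q = ks + \lambda s(s-1)$, so that $K = \sum_w d_w$ and $Q = \sum_w d_w^2$, I record that $Q/K = 1 + \lambda(s-1)/k$, so the integer $m$ in the statement satisfies $m \le Q/K < m+1$.

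For the first inequality the key point is that each $d_w$ and $m$ are integers, so no integer lies strictly between $m$ and $m+1$; hence $(d_w - m)(d_w - m - 1) \ge 0$ for every $w \in N(S)$. Expanding as $d_w^2 - (2m+1)d_w + m(m+1) \ge 0$ and summing over $N(S)$ yields
\[
Q - (2m+1)K + |N(S)|\,m(m+1) \ge 0,
\]
that is, $|N(S)| \ge \big((2m+1)K - Q\big)/(m(m+1))$, since $m(m+1) > 0$. Substituting $K = ks$, $Q = ks + \lambda s(s-1)$ and dividing by $s$ turns the right-hand side into $\big(2km - \lambda(s-1)\big)/(m(m+1))$, which is the first displayed bound.

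For the second inequality it suffices, after multiplying through by $|S|$, to prove $\big((2m+1)K - Q\big)/(m(m+1)) \ge K^2/Q$; here $K^2/Q$ is exactly the Mullin bound on $|N(S)|$ recalled before the lemma (equivalently the Cauchy--Schwarz estimate $(\sum_w d_w)^2 \le |N(S)|\sum_w d_w^2$). Setting $t = Q/K > 0$ and clearing the positive denominators $m(m+1)$ and $Q$, this reduces to $t^2 - (2m+1)t + m(m+1) \le 0$, which factors as $(t-m)\big(t-(m+1)\big) \le 0$ and therefore holds precisely because $m \le t < m+1$ by the choice of $m$. I expect the only genuinely delicate point to be the first inequality: recognising that the improvement over Cauchy--Schwarz is exactly the penalty for the $d_w$ being integers pinned to the two integers $m, m+1$ nearest the average $Q/K$, captured in one line by the nonnegativity of $(d_w-m)(d_w-m-1)$. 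The two double counts and the final factorisation are routine, and the remaining work is only the bookkeeping needed to match the stated form.
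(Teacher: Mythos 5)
Your proposal is correct and follows essentially the same route as the paper: the same two double counts (your per-vertex degrees $d_w$ are just a reindexing of the paper's counts $T_i$ of boundary vertices with exactly $i$ neighbours in $S$), the same integrality inequality $(d_w-m)(d_w-m-1)\ge 0$ for the first bound, and for the second bound the same quadratic fact $(t-m)\left(t-(m+1)\right)\le 0$ with $t = 1+\lambda(|S|-1)/k$, which the paper writes in the equivalent form $\left(\frac{\lambda(|S|-1)}{k}+1-m\right)\left(m-\frac{\lambda(|S|-1)}{k}\right)\ge 0$. No gaps; your observation that the second bound equals the Cauchy--Schwarz/Mullin estimate $K^2/Q$ is a nice way of packaging what the paper verifies by direct algebra.
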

	\begin{proof}
		Since the roles of $V_1(\Gamma)$ and $V_2(\Gamma)$ are symmetric, we may assume $S \subseteq V_1(\Gamma)$ without loss of generality. Let $T_i$ denote the number of vertices in $V_2(\Gamma)$ adjacent to exactly $i$ vertices in $S$. By counting the number of edges from $S$ to $N(S)$, as well as the number of paths of length $2$ from $S$ to $S$, in two different ways, we obtain $\sum_{i=1}^k T_i = |N(S)|$, $\sum_{i=1}^k i T_i = k|S|$ and $\sum_{i=1}^k i(i-1)T_i = \lambda |S|(|S|-1)$. 
		Since $n^2 \ge n$ for all integers $n$, we have
		\begin{align*}
		0 &\le \sum_{i=1}^k \left( (i-m)^2 - (i-m) \right) T_i\\
		&= \sum_{i=1}^k \left( i(i-1) - 2mi + m(m+1) \right) T_i\\
		&= \lambda |S| (|S|-1) - 2mk|S| + m(m+1)|N(S)|.
		\end{align*}
		Hence
		$$\frac{|N(S)|}{|S|} \geq \frac{2km-\lambda(|S|-1)}{m (m + 1)}.$$
		Since $\frac{\lambda(|S|-1)}{k} \le m \le \frac{\lambda(|S|-1)}{k}+1$ by the definition of $m$, we have
		$$
		\left(\frac{\lambda(|S|-1)}{k}+1-m\right)\left(m-\frac{\lambda(|S|-1)}{k}\right) \ge 0,
		$$ 
		that is,
		$$
		\left(\frac{\lambda(|S|-1)}{k}+1\right)\left(2m-\frac{\lambda(|S|-1)}{k}\right) \ge m(m+1).
		$$
		This yields
		$$
		\frac{2km-\lambda(|S|-1)}{m (m + 1)} \geq \frac{k^2}{k+\lambda(|S|-1)}
		$$
		as required.
		\qed
	\end{proof}
	
	Define $f: [0,v]\rightarrow[0,2v]$ by
	\begin{equation}
	\label{eq:f}
	f(x) = \frac{k^2 x}{k+\lambda(x-1)}+x.
	\end{equation}
 
	\begin{thm}
		\label{thm:f}
		Let $\Gamma$ be a \vkl{}-graph.
		Then for any $\emptyset \neq X \subseteq V(\Gamma)$ we have
		$$|N(X)| \geq f(|X|-f^{-1}(|X|)) - |X|.$$
	\end{thm}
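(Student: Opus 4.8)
The plan is to split $X$ across the bipartition, reduce the two-sided problem to the one-sided Lemma~\ref{lem:onesidedbound}, and then carry out a one-variable optimization. Write $X_1 = X \cap V_1(\Gamma)$ and $X_2 = X \cap V_2(\Gamma)$, set $a = |X_1|$, $b = |X_2|$ and $s = |X| = a+b$, and introduce the shorthand $g(x) = f(x) - x = \frac{k^2 x}{k+\lambda(x-1)}$, so that Lemma~\ref{lem:onesidedbound} reads $|N(S)| \ge g(|S|)$ for any $S$ contained in a single part. Since $f$ is strictly increasing (as $g$ is), $f^{-1}$ is well defined.

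First I would observe that, because $\Gamma$ is bipartite, the part of $X \cup N(X)$ lying in $V_1(\Gamma)$ is exactly $X_1 \cup N(X_2)$ and the part lying in $V_2(\Gamma)$ is exactly $X_2 \cup N(X_1)$. These are disjoint, so
$$|X| + |N(X)| = |X_1 \cup N(X_2)| + |X_2 \cup N(X_1)|.$$
Applying Lemma~\ref{lem:onesidedbound} to $X_1$ and to $X_2$, together with $|X_1 \cup N(X_2)| \ge \max\{|X_1|, |N(X_2)|\}$ and its symmetric counterpart, reduces everything to the single inequality
$$|X| + |N(X)| \ge \max\{a, g(b)\} + \max\{b, g(a)\}.$$
It then suffices to show that the right-hand side is at least $f(s - f^{-1}(s))$ for every splitting $a+b=s$; subtracting $|X| = s$ yields the theorem.

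The heart of the matter is this one-variable optimization, which rests on three elementary properties of $g$ on $[0,v]$, all consequences of the design identity $\lambda(v-1)=k(k-1)$ together with $\lambda < k$: namely, $g$ is strictly increasing, $g$ is concave, and $g(x) \ge x$ with equality only at $x \in \{0,v\}$. (The degenerate case $\lambda = 0$ forces $k=1$ and $g(x)=x$, where the claimed bound collapses to $|N(X)| \ge 0$.) By symmetry I may assume $a \ge b$; then $g(a) \ge a \ge b$ gives $\max\{b, g(a)\} = g(a)$, so the right-hand side becomes $\max\{a, g(b)\} + g(a)$. I would then introduce the balance point $a^\ast = s - f^{-1}(s)$ and $b^\ast = f^{-1}(s)$; unwinding $f(b^\ast)=s$ shows $a^\ast = g(b^\ast)$ and $s = a^\ast + b^\ast$, and $g(x)\ge x$ yields $s/2 \le a^\ast \le s$.

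Finally I would split according to the inner maximum. If $a \ge g(b)$, the bound equals $f(a)$; since $a - g(s-a)$ is increasing in $a$ and vanishes at $a^\ast$, the condition $a \ge g(b)$ is exactly $a \ge a^\ast$, so monotonicity of $f$ gives $f(a) \ge f(a^\ast)$. If instead $a < g(b)$, the bound equals $g(a)+g(b)$, and now $a < a^\ast$, so the pair $(a,b)$ is more balanced than $(a^\ast,b^\ast)$; concavity of $g$ then gives $g(a)+g(b) \ge g(a^\ast)+g(b^\ast) = g(a^\ast) + a^\ast = f(a^\ast)$. In either case the right-hand side is at least $f(a^\ast) = f(s - f^{-1}(s))$, as needed. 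I expect the optimization to be the main obstacle: the key is to recognize that the extremum occurs at the balance point $a^\ast = s - f^{-1}(s)$, where the two one-sided estimates come into equilibrium, and to dispatch the maxima cleanly through the monotonicity and concavity of $g$ rather than through casework on the discrete structure of $\Gamma$.
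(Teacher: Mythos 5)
Your proof is correct and takes essentially the same approach as the paper: the same one-sided bound from Lemma~\ref{lem:onesidedbound}, the same reduction of $|N(X)|$ to $\max\{0,g(a)-b\}+\max\{0,g(b)-a\}$ (the paper writes this as $\max\{f(|S|)-|X|,0\}+\max\{f(|T|)-|X|,0\}$), and the same concavity argument pivoting on the balance point $f^{-1}(|X|)$. Your case split on $a \ge g(b)$ is exactly the paper's split on $|S| \le f^{-1}(|X|)$ rewritten in terms of the larger part, so the two arguments coincide up to the substitution $g(x) = f(x) - x$.
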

	
	\begin{proof}
		By taking the first and second derivatives, one can see that $f$ is increasing, concave and bijective. So $f^{-1}$ exists and is increasing and convex.
		
		Denote $S=X\cap V_1(\Gamma)$ and $T=X\cap V_2(\Gamma)$. We may assume $|S| \le |T|$ without loss of generality. By Lemma \ref{lem:onesidedbound}, 
		\begin{align*}
		|N(X)| &= |N(S) \setminus T| + |N(T) \setminus S|\\
		&\geq \max\{|N(S)| - |T|, 0\} + \max\{|N(T)| - |S|, 0\}\\
		&\geq \max\{f(|S|) - |S| - |T|, 0\} + \max\{f(|T|) - |T| - |S|, 0\}.
		%\\&= \max\{f(|S|) - |S|, |T|\} + \max\{f(|T|) - |T|, |S|\} - |X|
		\end{align*}
		
		If $|S| \leq f^{-1}(|X|)$, then $|N(X)| \geq f(|X| - |S|) - |X|
		\geq f\left(|X| - f^{-1}(|X|)\right) - |X|$, as claimed.
		
		%If $|S| \leq f^{-1}(|X|)$, then $f(|S|) \leq |X|$ and hence $|N(X)| \geq (f(|X|) - |S|) - |X| \geq f\left(|X| - f^{-1}(|X|)\right)-|X|$, as claimed. 
		
		Otherwise assume $|S| > f^{-1}(|X|)$. Then $|X| - f^{-1}(|X|) > |X|-|S| = |T| \ge |S|$. 
		By the concavity of $f$, we have
		\begin{align*}
		f(|S|) + f(|X|-|S|) &\ge f\left(f^{-1}(|X|)\right) + f\left(|X|-f^{-1}(|X|)\right)\\
		&= |X| + f\left(|X|-f^{-1}(|X|)\right).
		\end{align*}
		It follows that
		\begin{align*}
		|N(X)| &\geq f(|S|)-|X| + f(|X| - |S|)-|X|\\
		&\geq f\left(|X|-f^{-1}(|X|)\right) - |X|.
		\end{align*}
		\qed
	\end{proof}

We now use Theorem \ref{thm:f} to prove the following lower bound for any \vkl{}-graph. 

	\begin{thm}\label{thm:kmubound}
		Let $\Gamma$ be a \vkl{}-graph and $\mu = \sqrt{k - \lambda}$. Then $$\isop(\Gamma) \geq (k-\mu)\frac{k^2+\mu^2}{k^3+\mu^3}.$$
	\end{thm}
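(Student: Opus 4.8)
The plan is to feed Theorem~\ref{thm:f} into the definition of the vertex-isoperimetric number. Since $|V(\Gamma)|=2v$, we have $\isop(\Gamma)=\min\{|N(X)|/|X|\}$ over nonempty $X$ with $x:=|X|\le v$, and Theorem~\ref{thm:f} bounds each term below by $\frac{f(x-f^{-1}(x))-x}{x}$. The whole problem thus collapses to a single-variable optimisation: it suffices to show that this lower bound, minimised over $x\in(0,v]$, equals $(k-\mu)\frac{k^2+\mu^2}{k^3+\mu^3}$, and that the minimum is uniform over all feasible $X$.

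First I would reparametrise by $s=f^{-1}(x)$. Writing $\phi(t)=\frac{k^2t}{\mu^2+\lambda t}$ (legitimate because $k+\lambda(t-1)=\mu^2+\lambda t$, as $\mu^2=k-\lambda$), the definition \eqref{eq:f} gives $f(s)-s=\phi(s)$. Setting $w=\phi(s)$ and $u=\phi(w)$, one finds $x=s+w$, so $x-f^{-1}(x)=w$ and $f(w)=w+u$; the bound therefore simplifies to the clean quantity $\frac{u-s}{s+w}$. In other words $s,w,u$ are the first three iterates of the M\"obius map $\phi$, and the extremal configuration is governed entirely by the dynamics of $\phi$.

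The key step is to linearise $\phi$. Its matrix $\left(\begin{smallmatrix}k^2&0\\\lambda&\mu^2\end{smallmatrix}\right)$ is triangular with eigenvalues $k^2,\mu^2$, and using the design relation $\lambda v=k^2-\mu^2$ (equivalently \eqref{eq:vkl}) one checks that $\phi$ fixes $0$ and $v$. In the conjugating coordinate $\sigma=\frac{t}{v-t}$, which sends these fixed points to $0$ and $\infty$, a direct computation yields $\sigma(\phi(t))=r\,\sigma(t)$ with $r=k^2/\mu^2>1$. Hence $s,w,u$ carry coordinates $\sigma,r\sigma,r^2\sigma$, and substituting back transforms $\frac{u-s}{s+w}$ into the rational function
\[
g(\sigma)=\frac{(r^2-1)(1+r\sigma)}{(1+r^2\sigma)(1+r+2r\sigma)} .
\]
The constraint $|X|\le v$, i.e.\ $s+w\le v$, translates (after setting $s+w=v$ and simplifying to $r\sigma^2=1$) to the clean range $\sigma\in(0,1/\sqrt r\,]$.

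It then remains to minimise $g$ on $(0,1/\sqrt r\,]$. The main technical step is monotonicity: a short computation shows that the sign of $g'$ is that of $-r(1+r^2)-4r^3\sigma-2r^4\sigma^2$, which is negative throughout, so $g$ is strictly decreasing and its minimum is attained at the endpoint $\sigma=1/\sqrt r$, that is, at the balanced case $|X|=|V|/2$. Because $g$ is decreasing, every feasible $X$ satisfies $\frac{|N(X)|}{|X|}\ge g(\sigma)\ge g(1/\sqrt r)$, giving a uniform bound. Finally, writing $\rho=\sqrt r=k/\mu$, the endpoint value collapses via $1+r+2r\sigma=(1+\rho)^2$ to $\frac{(\rho-1)(\rho^2+1)}{(\rho+1)(\rho^2-\rho+1)}$, which after clearing powers of $\mu$ and using $(k+\mu)(k^2-k\mu+\mu^2)=k^3+\mu^3$ is exactly $(k-\mu)\frac{k^2+\mu^2}{k^3+\mu^3}$. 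I expect the real obstacle to be conceptual rather than computational: recognising the M\"obius/iteration structure and picking the coordinate $\sigma=t/(v-t)$, without which one is stuck optimising an opaque quartic-over-quartic rational function in $s$ by brute force.
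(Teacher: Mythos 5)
Your proof is correct, and while it shares its skeleton with the paper's proof (both feed Theorem~\ref{thm:f} into the definition of $\isop$, reduce to the balanced case $|X|=v$, and evaluate using $f^{-1}(v)=\frac{\mu v}{k+\mu}$, which in your coordinates is exactly $\sigma=1/\sqrt r$), the key monotonicity step is handled by a genuinely different mechanism. The paper dispatches it in three lines of soft analysis: since $f$ is concave with $f(0)=0$, one has $f(\alpha x)\ge\alpha f(x)$ and $f^{-1}(\alpha x)\le \alpha f^{-1}(x)$ for $\alpha\in[0,1]$, whence $\frac{1}{|X|}f\left(|X|-f^{-1}(|X|)\right)\ge \frac{1}{v}f\left(v-f^{-1}(v)\right)$ directly — no change of variables, no derivative computation, no need to know the bound's exact shape. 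You instead linearise the M\"obius map $\phi(t)=\frac{k^2t}{\mu^2+\lambda t}$ via the conjugacy $\sigma=t/(v-t)$ (your verifications are all sound: the fixed points $0,v$ follow from $\lambda v=k^2-\mu^2$, the multiplier is $r=k^2/\mu^2$, the derivative sign $-r(1+r^2)-4r^3\sigma-2r^4\sigma^2$ checks out, and the endpoint evaluation collapses correctly to $(k-\mu)\frac{k^2+\mu^2}{k^3+\mu^3}$). What your route buys is strictly more information: an explicit closed form $g(\sigma)$ for the lower bound at every cardinality $|X|$, with \emph{strict} monotonicity, which quantifies how much Theorem~\ref{thm:f} improves for small $|X|$ — the kind of refinement the paper exploits ad hoc in Case~1 of its Theorem~\ref{thm:order} lower-bound proof, where it re-runs the scaling argument with $|X|\le 6$ for $\Gamma_{2,2}$. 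What it costs is robustness and length: the concavity argument uses only qualitative properties of $f$ already established in the proof of Theorem~\ref{thm:f}, whereas your computation depends on the specific M\"obius form of $f(x)-x$. One small edge to note: in the degenerate case $\lambda=0$ (so $k=1$, $\mu=1$, $r=1$) your fixed-point and conjugacy setup breaks down, but the claimed bound is $0$ and trivially true, so nothing is lost; you should just flag that the argument assumes $r>1$, i.e.\ $k>\mu$.
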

	
	\begin{proof}
		Let $f$ be as defined in (\ref{eq:f}). Using \eqref{eq:vkl} it can be verified that $f^{-1}(v) = \frac{\mu v}{k+\mu}$. Note that, for any $x \in [0,v]$ and $\alpha \in [0,1]$, $f(\alpha x) = f(\alpha x + (1-\alpha)0) \ge \alpha f(x) + (1-\alpha)f(0) = \alpha f(x)$. Similarly, $f^{-1}(\alpha x) \le \alpha f^{-1}(x)$. Thus, for any $\emptyset \neq X \subseteq V(\Gamma)$ with $|X| \leq \frac{|V(\Gamma)|}{2} = v$, by Theorem \ref{thm:f}, 
		\begin{align*}
		\frac{|N(X)|}{|X|} &\geq -1 + \frac{1}{|X|}f\left(|X|-f^{-1}(|X|)\right)\\
		&\geq -1 + \frac{1}{|X|}f\left(\frac{|X|}{v}\left(v-f^{-1}(v)\right)\right)\\
		&\geq -1 + \frac{1}{v}f\left(v-f^{-1}(v)\right)\\
		&= -1 + \frac{1}{v}f\left(v - \frac{\mu v}{k+\mu}\right)\\
		&= (k-\mu)\frac{k^2+\mu^2}{k^3+\mu^3},
		\end{align*}
		where the last equality is obtained by a straightforward evaluation of $f$ at $\frac{kv}{k+\mu}$ by using (\ref{eq:vkl}).
		\qed
	\end{proof}

Equipped with Theorem \ref{thm:kmubound}, we are now ready to prove the lower bound for $\isop(\Gamma_{n,q})$ as stated in Theorem \ref{thm:order}.

\medskip	
	\begin{proof}\textbf{of Theorem \ref{thm:order} (lower bound)}~		 
		Let $n \geq 2$ be an integer and $q$ a prime power. It suffices to prove
		$$
		\isop(\Gamma_{n,q}) > 1-\frac{q^\frac{n+1}{2}(q-1)}{q^{n+1}-1}.
		$$	
		Let $(v,k,\lambda) = \left(\frac{q^{n+1}-1}{q-1},\frac{q^n-1}{q-1},\frac{q^{n-1}-1}{q-1}\right)$ be the parameters of $\Gamma_{n,q}$, and let $\mu = \sqrt{k-\lambda} = q^\frac{n-1}{2}$.  
		
		\medskip
		\textsf{Case 1:} $(n,q)=(2,2)$. Let $\emptyset \ne X \subset V(\Gamma_{2,2})$ be such that $|X| \le v  = 7$ and $\isop(\Gamma_{2,2}) = \frac{|N(X)|}{|X|}$. Let $f$ be as defined in (\ref{eq:f}). If $|X| \le 6$, then by the same reasoning as in Theorem \ref{thm:kmubound} we obtain
		\begin{align*}
		\frac{|N(X)|}{|X|} &\ge -1 + \frac{1}{|X|}f\left(|X|-f^{-1}(|X|)\right)\\
		&\ge -1 + \frac{1}{6}f\left(6-f^{-1}(6)\right)\\
		&= \frac{11}{\sqrt{552}}\left(79-5\sqrt{73}\right)\\
		&>1-\frac{2\sqrt{2}}{7}.
		\end{align*}
If $|X|=7$, then $|N(X)| \ge \left\lceil f\left( 7-f^{-1}(7))-7 \right)\right\rceil = 5$ and hence
$$
\frac{|N(X)|}{|X|} \ge \frac{5}{7} > 1 - \frac{2\sqrt{2}}{7}.
$$
		
		\textsf{Case 2:} $(n,q)=(2,3)$. Plugging $k=4$ and $\mu=\sqrt{3}$ into Theorem \ref{thm:kmubound}, we obtain
		$$\isop(\Gamma_{2,3}) \ge \left(4-\sqrt{3}\right)\frac{16+3}{64+3\sqrt{3}}>1-\frac{3\sqrt{3}}{13}.$$
		
		\textsf{Case 3:} $n \geq 3$ or $q \geq 4$. In this case we have
		$k-\lambda=\mu^2 = q^{n-1} \geq 4$ and so $k^2 \geq k(4+\lambda) > 4k-4\lambda = 4\mu^2$. Thus $k-\mu > \mu \geq 2$. Therefore,
		$$\frac{k^2-k \mu+2\mu^2}{\mu(k-\mu)^2} = \frac{1}{\mu} + \frac{1}{k-\mu} + \frac{2\mu}{(k-\mu)^2} < 2 \leq q.$$
		This together with Theorem \ref{thm:kmubound} implies
		\begin{align*}
		\isop(\Gamma_{n,q}) &\geq (k-\mu)\frac{k^2+\mu^2}{k^3+\mu^3}\\
		&= 1 - \frac{\mu}{k + \frac{\mu(k-\mu)^2}{k^2-k \mu+2\mu^2}}\\
		&> 1 - \frac{\mu}{k + \frac{1}{q}}\\
		&= 1-\frac{q^\frac{n+1}{2}(q-1)}{q^{n+1}-1}.
		\end{align*}
		\qed
	\end{proof}

	\section{Upper bounds for $i_V(\Gamma_{n,q})$}
	\label{sec:ub}
	 	
The following results are taken from \cite[Corollary 10]{de2012large}, \cite[Corollary 14]{de2012large} and \cite[Theorem 5]{mubayi2007independence}. We will use them in the proof of the upper bound for $i_V(\Gamma_{n,q})$ as stated in Theorem \ref{thm:order}.

	\begin{lemma}
		\label{lem:pq}
		Let $p$ be a prime and $q$ a prime power. Then 
		\begin{enumerate}[\rm (a)]
			\item $\incfree(\Gamma_{1,q}) \geq \frac{q}{2}$;
			\item $\incfree(\Gamma_{2,p}) \geq \frac{120}{73\sqrt{73}}p^\frac{3}{2}$;
			\item $\incfree(\Gamma_{2,p^{2k}}) \geq \frac{p^{3k}}{2}$ for all positive integers $k$;
			\item $\incfree(\Gamma_{2,p^{2k+1}}) \geq p^{3k}\incfree(\Gamma_{2,p})$ for all positive integers $k$; and
			\item $\incfree(\Gamma_{n+2,q}) \geq q \incfree(\Gamma_{n,q})$ for all positive integers $n$.
		\end{enumerate}
	\end{lemma}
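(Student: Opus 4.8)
The five assertions are of three distinct types, and I would handle them accordingly. Part (a) is essentially degenerate: since here $\lambda=\frac{q^{0}-1}{q-1}=0$ and $k=1$, the graph $\Gamma_{1,q}$ is a perfect matching between the $q+1$ points and the $q+1$ hyperplanes of $\PG(1,q)$, so for any $S$ one has $|V_2\setminus N(S)|=(q+1)-|S|$ and hence $\incfree(\Gamma_{1,q})=\max_S\min\{|S|,\,q+1-|S|\}=\lfloor(q+1)/2\rfloor\geq q/2$. Part (e) I would prove by a coordinate-slotting construction that decouples the bilinear form. Writing vectors of $\mathbb{F}_q^{n+3}=\mathbb{F}_q^{n+1}\oplus\mathbb{F}_q\oplus\mathbb{F}_q$ as $(u,a,b)$ with $(u,a,b)\cdot(v,c,d)=u\cdot v+ac+bd$, fix a maximum incidence-free pair $(S_0,T_0)$ in $\Gamma_{n,q}$ of common size $\incfree(\Gamma_{n,q})$, say $S_0=\{\langle u_i\rangle\}$ and $T_0=\{\langle v_j\rangle^{\perp}\}$ with $u_i\cdot v_j\neq0$, and set
\[
S=\{\langle(u_i,a,0)\rangle : \langle u_i\rangle\in S_0,\ a\in\mathbb{F}_q\},\quad T=\{\langle(v_j,0,b)\rangle^{\perp} : \langle v_j\rangle^{\perp}\in T_0,\ b\in\mathbb{F}_q\}.
\]
A short check gives $|S|=|T|=q\,\incfree(\Gamma_{n,q})$, and every relevant pairing equals $u_i\cdot v_j+a\cdot0+0\cdot b=u_i\cdot v_j\neq0$ because the two families occupy disjoint extra coordinates; hence $(S,T)$ is incidence-free and (e) follows.

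For part (c) I would use a Hermitian unital. When $q$ is a square there is a unital $U\subseteq\PG(2,q)$ with $|U|=q^{3/2}+1$ such that each point $P\in U$ lies on a unique tangent line $t_P$ (meeting $U$ only at $P$), the assignment $P\mapsto t_P$ being a bijection onto the tangents. Partitioning $U=S\sqcup R$ into near-equal halves and taking $T=\{t_P:P\in R\}$, any $Q\in S\cap t_P$ would satisfy $Q\in t_P\cap U=\{P\}$, forcing $Q=P$ and contradicting $S\cap R=\emptyset$. Thus $(S,T)$ is incidence-free, and choosing $|S|=\lfloor|U|/2\rfloor\le|R|=|T|$ yields $\incfree(\Gamma_{2,q})\geq\lfloor|U|/2\rfloor\geq p^{3k}/2$, which proves (c) for every square $q=p^{2k}$ (including odd $q$, where maximal arcs are unavailable).

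Parts (b) and (d) are the genuinely hard cases, and here I would follow the cited work. For (b) the prime field offers no proper subfield to exploit, so I would pass through the orthogonal polarity of $\PG(2,p)$, which sends a point $\langle u\rangle$ to the line $\langle u\rangle^{\perp}$: a set $I$ of pairwise non-orthogonal, non-absolute points yields an incidence-free pair $(I,\pi(I))$ of equal size, i.e.\ an independent set in the polarity (Erd\H{o}s--R\'enyi) graph, the few absolute points costing only $O(q)$. The Mubayi--Williford lower bound on this independence number, after its internal optimization, produces the stated constant $\frac{120}{73\sqrt{73}}$. For (d), with $q=p^{2k+1}$ non-square there is again no unital over $\mathbb{F}_q$, so I would instead lift the prime-field pair of (b) along the extension $\mathbb{F}_q/\mathbb{F}_p$, multiplying its size by $p^{3k}$ while preserving non-incidence, so that the resulting pair attains the optimal order $q^{3/2}$.

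I expect the crux to lie entirely in (b) and (d). The additive coupling $u\cdot v+(\text{extra terms})$ in the pairing is precisely what makes naive products collapse; in (a), (c) and (e) it is neutralised by the matching structure, by the rigid one-point intersection of a unital, or by the slotting trick, but over (near-)prime fields none of these devices is available. There one must instead control the incidences by algebraic-curve or probabilistic arguments and track the multiplicative constants carefully enough to certify the exponent $\tfrac32$ in $q^{3/2}$, which is the level at which the eigenvalue (expander-mixing) upper bound for this $(v,k,\lambda)$-graph becomes binding.
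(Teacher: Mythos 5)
The paper gives no proof of this lemma at all: all five parts are imported verbatim from \cite{de2012large} (Corollaries 10 and 14) and \cite{mubayi2007independence} (Theorem 5), so there is no internal argument to compare against, and any correct proof you supply goes beyond the paper. Your self-contained arguments for (a), (c) and (e) are correct. In (a), $\Gamma_{1,q}$ has $(k,\lambda)=(1,0)$ and is a perfect matching on two sides of size $q+1$ (the paper itself notes this degenerate case), whence $\incfree(\Gamma_{1,q})=\lfloor (q+1)/2\rfloor\ge q/2$. In (c), the unital argument is sound: the Hermitian unital has $p^{3k}+1$ points, $P\mapsto t_P$ is injective because $t_P\cap U=\{P\}$, no tangent at a point of $R$ meets $S$, and $\lfloor (p^{3k}+1)/2\rfloor\ge p^{3k}/2$ holds even for $p=2$; your remark that this covers odd characteristic, where maximal arcs do not exist, is exactly why a unital rather than an arc is the right device here. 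In (e), the coordinate-slotting works as claimed: the vectors $(u_i,a,0)$, resp.\ $(v_j,0,b)$, are pairwise non-proportional, so $|S|=|T|=q\,\incfree(\Gamma_{n,q})$, and every pairing collapses to $u_i\cdot v_j\ne 0$. These constructions are in the same spirit as those in the cited literature, so in effect you have written out proofs the paper omits.

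For (b) and (d) you defer to the cited works, which is also what the paper does, so no gap arises \emph{relative to the paper}; but your sketches should not be mistaken for proofs. In (d), ``lift the prime-field pair, multiplying its size by $p^{3k}$ while preserving non-incidence'' is precisely the hard content of \cite[Corollary 14]{de2012large}, and the naive lift fails: if you perturb coordinates by elements of an $\mathbb{F}_p$-complement $M$ of $\mathbb{F}_p$ in $\mathbb{F}_{p^{2k+1}}$, the pairing stays in $\mathbb{F}_p^{*}+M$ (hence is nonzero) only so long as each perturbation multiplies against an unperturbed subfield entry; under that constraint a balanced incidence-free pair tops out at size $p^{2k}\incfree(\Gamma_{2,p})$ (e.g.\ perturbing the two affine point coordinates and the constant line coordinate), and reaching $p^{3k}$ forces products of two perturbations, which no translation trick controls. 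Likewise in (b), deleting absolute points from a Mubayi--Williford independent set, as you propose, would only yield $\frac{120}{73\sqrt{73}}p^{3/2}-O(p)$, not the clean inequality stated; one needs Theorem 5 of \cite{mubayi2007independence} as it stands. In summary: (a), (c), (e) are genuine and correct proofs where the paper has none, while (b) and (d) remain citations in both your write-up and the paper, and your heuristic descriptions of them underestimate where the difficulty lies.
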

 
To establish the upper bound in Theorem \ref{thm:order} we will also use some known results on the well-known circle problem and its primitive version. For any real number $r > 0$, define
	$$
	C(r) = \left\{(x, y) \in \mathbb{Z}^2 : x^2 + y^2 \le r  \right\} 
	$$
	and
	$$
	C'(r) = \left\{(x, y) \in \mathbb{Z}^2 : x^2 + y^2 \le r, (x,y)=1 \right\}.
	$$

	\begin{lemma}
		\label{Cn}
		Let $r > 0$ and $\epsilon > 0$ be real numbers. Then
		\begin{align*}
		|C(r)| &= \pi r + O(r^{\frac{1}{2}}); \\
		|C'(r)| &= \frac{6}{\pi} r + O\left(r^{\frac{1}{2}+\epsilon}\right);\\
		\sum_{(x,y)\in C'(r)}\!\!\!\sqrt{x^2 + y^2} &= \frac{4}{\pi} r\sqrt{r} + O\left(r^{1+\epsilon}\right).
		\end{align*}
	\end{lemma}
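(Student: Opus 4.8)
The plan is to prove Lemma~\ref{Cn} by interpreting each of the three quantities as a sum over lattice points and comparing it with the corresponding integral over the disc of radius $\sqrt{r}$, which is the standard Gauss circle problem strategy. First I would handle $|C(r)|$, the count of lattice points in the disc $x^2+y^2\le r$. The classical argument associates to each lattice point the unit square with that point as, say, its lower-left corner; the union of these squares is sandwiched between the discs of radius $\sqrt{r}-\sqrt{2}$ and $\sqrt{r}+\sqrt{2}$ (since every point of a unit square lies within distance $\sqrt{2}$ of its corner). Comparing areas gives $\pi(\sqrt{r}-\sqrt{2})^2 \le |C(r)| \le \pi(\sqrt{r}+\sqrt{2})^2$, and expanding the squares yields $|C(r)| = \pi r + O(r^{1/2})$ immediately. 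This is elementary and I would state it quickly.

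Next I would pass to the primitive count $|C'(r)|$, where we restrict to points with $\gcd(x,y)=1$. The standard device is Möbius inversion on the gcd: every lattice point $(x,y)\ne(0,0)$ can be written uniquely as $d\cdot(x',y')$ with $d=\gcd(x,y)$ and $(x',y')$ primitive, so counting by the value of $d$ gives
\begin{equation*}
|C(r)| - 1 = \sum_{d\ge 1} |C'(r/d^2)|,
\end{equation*}
where the $-1$ removes the origin (which has no well-defined primitive part). Möbius inversion then yields $|C'(r)| = \sum_{d\ge 1}\mu(d)\bigl(|C(r/d^2)|-1\bigr)$. Substituting the estimate $|C(t)| = \pi t + O(t^{1/2})$ into each term, the main terms sum to $\pi r \sum_{d}\mu(d)/d^2 = \pi r/\zeta(2) = 6r/\pi$, using $\zeta(2)=\pi^2/6$. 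The error analysis is where the $\epsilon$ enters: the tail of the $O((r/d^2)^{1/2})$ error terms summed over $d$ up to $\sqrt{r}$ behaves like $\sqrt{r}\sum_{d\le\sqrt r} 1/d = O(r^{1/2}\log r)$, which is absorbed into $O(r^{1/2+\epsilon})$; the constant terms contribute $O(\sqrt r)$ from the $\mu(d)$ summed over $d\le\sqrt r$. I would carry out this bookkeeping carefully but without excessive detail.

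Finally, for the weighted sum $\sum_{(x,y)\in C'(r)}\sqrt{x^2+y^2}$, I would run the same two-step scheme. The analogous unweighted weighted sum $\sum_{(x,y)\in C(r)}\sqrt{x^2+y^2}$ compares to $\int\!\!\int_{x^2+y^2\le r}\sqrt{x^2+y^2}\,dx\,dy = 2\pi\int_0^{\sqrt r} \rho^2\,d\rho = \tfrac{2\pi}{3}r^{3/2}$, with an error of order $r$ coming from the boundary fringe (each lattice point's weight differs from the integrand over its unit square by $O(1)$ in value over an annulus of area $O(\sqrt r)$, plus the $O(\sqrt r)$ boundary points each carrying weight $O(\sqrt r)$). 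Then Möbius inversion over the gcd $d$ pulls out a factor $d$ from each weight, giving a main term $\tfrac{2\pi}{3}r^{3/2}\sum_d \mu(d)/d^3 = \tfrac{2\pi}{3}r^{3/2}/\zeta(3)$; but the stated answer is $\tfrac{4}{\pi}r^{3/2}$, so I would double-check the normalization—the cleanest route is to compute the unweighted full sum first and verify its leading constant, then divide through by $\zeta(3)$ (or whatever the correct zeta value is) in the primitive reduction, matching against the claimed $\tfrac{4}{\pi}$. The main obstacle is precisely this error-term accounting in the weighted primitive sum: keeping the boundary error at $O(r^{1+\epsilon})$ requires that the $O(t)$-type errors, once summed against $|\mu(d)|/d^{?}$ over $d\le\sqrt r$, telescope to a logarithmic factor absorbable into $r^\epsilon$, and one must confirm the leading constant survives the inversion with the right value. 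If the constants do not match a naive $\zeta(3)$ computation, the resolution will lie in recognizing that these estimates are quoted from the classical literature on the primitive circle problem, so I would cite the standard source rather than re-deriving the optimal exponents, and simply verify the leading constants by the integral comparison above.
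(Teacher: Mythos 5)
Your route is sound but genuinely different from the paper's. The paper simply cites the first two estimates from the classical literature on the Gauss circle problem and its primitive version (after reducing to integer $r$ via $C(\lfloor r\rfloor)\subseteq C(r)\subseteq C(\lceil r\rceil)$), and then derives the third by Abel summation: writing the weighted sum as $\sum_{i=1}^{r}\sqrt{i}\,\bigl(|C'(i)|-|C'(i-1)|\bigr)$ and summing by parts, the main term $\frac{6}{\pi}\sum_{i\le r}\sqrt{i}=\frac{4}{\pi}r^{3/2}+O(r)$ emerges, and the errors are controlled by $\sqrt{i}-\sqrt{i+1}=O(i^{-1/2})$ against $|C'(i)|-\frac{6}{\pi}i=O(i^{1/2+\epsilon})$. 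You instead derive everything from scratch: the unit-square comparison for $|C(r)|$, M\"obius inversion over the gcd for $|C'(r)|$, and a weighted analogue for the third estimate. This is more self-contained; in particular, since the stated error terms carry an $\epsilon$, your elementary $O(r^{1/2}\log r)$ from the inversion already suffices for the second estimate, so you never actually need the sharper published bound the paper cites for the primitive circle problem. The bookkeeping you outline does go through: the weighted full-disc sum satisfies $\sum_{(x,y)\in C(t)}\sqrt{x^2+y^2}=\frac{2\pi}{3}t^{3/2}+O(t)$ by exactly the boundary-fringe argument you sketch.

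The one point you left unresolved --- the apparent $\zeta(3)$ mismatch --- is an arithmetic slip on your side, not a sign that you must retreat to citing the literature. Write $F(r)=\sum_{(x,y)\in C(r)\setminus\{0\}}\sqrt{x^2+y^2}$ and $G(r)=\sum_{(x,y)\in C'(r)}\sqrt{x^2+y^2}$. The decomposition $(x,y)=d(x',y')$ scales the weight by $d$, so $F(r)=\sum_{d\ge 1} d\,G(r/d^2)$, and inversion gives $G(r)=\sum_{d\ge 1}\mu(d)\,d\,F(r/d^2)$. The main term of each summand is
\begin{equation*}
\mu(d)\,d\cdot\frac{2\pi}{3}\left(\frac{r}{d^2}\right)^{3/2}=\frac{2\pi}{3}\,r^{3/2}\,\frac{\mu(d)}{d^{2}},
\end{equation*}
because the explicit factor $d$ cancels one power of $d$ from $(r/d^2)^{3/2}=r^{3/2}/d^{3}$; in your draft you wrote $\sum_d\mu(d)/d^{3}$, i.e.\ you forgot to apply the factor $d$ you had just pulled out. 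The relevant series is therefore $\sum_d\mu(d)/d^{2}=1/\zeta(2)=6/\pi^{2}$, giving the constant $\frac{2\pi}{3}\cdot\frac{6}{\pi^{2}}=\frac{4}{\pi}$, exactly as claimed. With that fixed, your error analysis closes: the inversion errors contribute $\sum_{d\le\sqrt{r}} d\cdot O(r/d^{2})=O(r\log r)$ and the tail of the main term is $O\bigl(r^{3/2}\sum_{d>\sqrt{r}}d^{-2}\bigr)=O(r)$, both absorbed into $O(r^{1+\epsilon})$.
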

	
	\begin{proof}
		Since $C(\lfloor r \rfloor) \subseteq C(r) \subseteq C(\lceil r \rceil)$ and $C'(\lfloor r \rfloor) \subseteq C'(r) \subseteq C'(\lceil r \rceil)$, it suffices to prove these equalities for positive integers $r$.  
		
		Let $r > 0$ be an integer. The first two equalities are well-known in the literature as the Gauss circle problem and the primitive Gauss circle problem respectively; see \cite{hilbert1952geometry} and \cite{zhai1999number}. The third one follows from the first two because
		\begin{align*}
		\sum_{(x,y)\in C'(r)}\!\!\!\sqrt{x^2 + y^2}
		&= \sum_{i=1}^r \sqrt{i}\left(|C'(i)| - |C'(i-1)|\right)\\
		&= \sum_{i=1}^r \frac{6}{\pi}\sqrt{i} + \sum_{i=1}^{r-1}\left( \sqrt{i} - \sqrt{i+1} \right)\left( |C'(i)| - \frac{6}{\pi}i \right) + \sqrt{r}\left(|C'(r)|-\frac{6}{\pi}r\right)\\
		&= \frac{4}{\pi}r\sqrt{r} + O(r^{1+\epsilon}),
		\end{align*}
		where the last line follows from the fact that $\sqrt{i} - \sqrt{i+1} = O(\frac{1}{\sqrt{i}})$.
		\qed
	\end{proof}
		
\begin{proof}\textbf{of Theorem \ref{thm:order} (upper bound)}~	
    In view of \eqref{eq:incfreeisop}, in order to prove the upper bound in \eqref{eq:order} it suffices to prove
	\be
	\label{eq:nq}
	\bar{\alpha}(\Gamma_{n,q}) \ge \left(\frac{1}{2} - O(p^{\epsilon - \frac{1}{4}})\right)q^{\frac{n+1}{2}}
	\ee
	for any integer $n \ge 1$, prime power $q=p^e$ and real number $\epsilon > 0$. It turns out that the key step is to handle the special case when $n=2$ and $q$ is a prime.  
	
	\medskip
	\textsf{Case 1:} $n=2$ and $q=p$ is a prime. In this case \eqref{eq:nq} is equivalent to 
	\be
	\label{2p}
	\bar{\alpha}(\Gamma_{2,p}) \ge \frac{p\sqrt{p}}{2} - O\left(p^{\frac{5}{4}+\epsilon}\right).
	\ee 
We prove this by construction. Let
		$$
		S = \left\{\left<(x,y,1)\right>: (x,y) \in C\left(\frac{p\sqrt{p}}{2\pi}\right)\right\} 
		$$
		and
		$$
		T = \left\{\left<(a,b,c)\right>^\perp: a, b, c \in \mathbb{Z}, \left|c-\frac{p}{2}\right| < \frac{p}{2} - \frac{p^{\frac{3}{4}}}{\sqrt{2\pi}}\sqrt{a^2+b^2}\right\}.
		$$
		We first claim that $S \cup T$ is an independent set of $\Gamma_{2,p}$. Indeed, for any combination of $x,y,a,b,c$ as above, we have $0 < (x,y,1)\cdot(a,b,c)<p$, because
		\begin{align*}
		\left|ax+by+c-\frac{p}{2}\right| &\le |ax+by| + \left|c-\frac{p}{2}\right|\\
		&< \sqrt{x^2+y^2}\sqrt{a^2+b^2} + \frac{p}{2} - \frac{p^{\frac{3}{4}}}{\sqrt{2\pi}}\sqrt{a^2+b^2}\\
		&\le \frac{p}{2}.
		\end{align*}
Thus $S \cup T$ is an independent set of $\Gamma_{2,p}$.
		
		It follows directly from Lemma \ref{Cn} that $|S| = \frac{p\sqrt{p}}{2} + O\left(p^\frac{3}{4}\right)$. We can get a lower bound for $|T|$ by only picking the points where $(a,b)=1$ and identifying $(a,b,c)$ with $(-a,-b,p-c)$. Using this and Lemma \ref{Cn}, we obtain
		\begin{align*}
		|T|
		%&\ge \frac{1}{2}\sum_{z \in R}\left( \left\lfloor p - \frac{p^\frac{3}{4}}{\sqrt{2\pi}}|z|\right\rfloor - \left\lceil  \frac{p^\frac{3}{4}}{\sqrt{2\pi}}|z|\right\rceil + 1 \right)\\
		&\ge \frac{1}{2}\sum_{(x,y) \in C'\left(\frac{\pi}{2}\sqrt{p}\right)   }\left(p-1-2\frac{p^\frac{3}{4}}{\sqrt{2\pi}}\sqrt{x^2+y^2}\right)\\
		&= \frac{p-1}{2}\left|C'\left(\frac{\pi}{2}\sqrt{p}\right)\right|-\frac{p^\frac{3}{4}}{\sqrt{2\pi}}\sum_{(x,y) \in C'\left(\frac{\pi}{2}\sqrt{p}\right)}\sqrt{x^2+y^2}\\
		&= \frac{p-1}{2}\left(3\sqrt{p}+O\left(p^{\frac{1}{4}+\epsilon}\right)\right) - \frac{p^\frac{3}{4}}{\sqrt{2\pi}}\left(\sqrt{2\pi}p^\frac{3}{4}+O\left(p^{\frac{1}{2}+\epsilon}\right)\right)\\
		&= \frac{p\sqrt{p}}{2} - O\left(p^{\frac{5}{4}+\epsilon}\right).
		\end{align*}
		From this and the definition of $\bar{\alpha}$ we obtain \eqref{2p} immediately.	

		We now deal with the general case by using Lemma \ref{lem:pq} and what we proved in Case 1.
		
		\medskip
		\textsf{Case 2:} $n \ge 2$ is an integer and $q=p^e$ a prime power.
		
		By (e) in Lemma \ref{lem:pq}, $\incfree(\Gamma_{n,q}) \geq q^{\frac{n-1}{2}} \incfree(\Gamma_{1,q})$ for odd $n \ge 1$. This together with (a) in Lemma \ref{lem:pq} implies that \eqref{eq:nq} holds for any odd integer $n \ge 1$. Again, by (e) in Lemma \ref{lem:pq}, $\incfree(\Gamma_{n,q}) \geq q^{\frac{n}{2}-1} \incfree(\Gamma_{2,q})$ for even $n \ge 2$. Hence it suffices to prove \eqref{eq:nq} for $n=2$. 	
		This has been proved in \eqref{2p} when $e=1$. In general, if $e = 2k+1 \ge 3$ is odd, then by \eqref{2p} and (d) in Lemma \ref{lem:pq}, we have $\incfree(\Gamma_{2,q}) \geq p^{3k} \incfree(\Gamma_{2,p}) \ge \left(\frac{1}{2} - O(p^{\epsilon - \frac{1}{4}})\right) p^{\frac{3(2k+1)}{2}} = \left(\frac{1}{2} - O(p^{\epsilon - \frac{1}{4}})\right) q^{\frac{3}{2}}$ as required. If $e = 2k \ge 2$ is even, then by (c) in Lemma \ref{lem:pq}, $\incfree(\Gamma_{2,q}) \geq \frac{p^{3k}}{2} \ge \left(\frac{1}{2} - O(p^{\epsilon - \frac{1}{4}})\right) q^{\frac{3}{2}}$.  
		\qed
	\end{proof}

	\section{Proof of Theorem \ref{thm:16}}
	\label{sec:small}

	In this section we will first prove that the values of $\incfree(\Gamma_{2,q})$ in Table \ref{tab:small} are correct. When $q \leq 7$, the exact value of $\incfree(\Gamma_{2,q})$ is given in \cite{ure1996study}. We determine the values of $\incfree(\Gamma_{2,q})$ for $q=8, 9, 11, 13, 16$ by proving matching upper and lower bounds.
	\begin{lemma}
		\label{lem:ineq-x}
		Let $q$ be a prime power. Let $x$ be a positive integer and $m=\left\lfloor\frac{x}{q+1}\right\rfloor+1$. If
		$$
		\frac{x+1}{m+1}\left(2(q+1)-\frac{x}{m}\right) > q(q+1)-x,
		$$
		then
		$$\incfree(\Gamma_{2,q}) \le x.$$
	\end{lemma}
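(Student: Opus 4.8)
The plan is to argue by contradiction, deriving a violation of the hypothesized inequality directly from Lemma \ref{lem:onesidedbound}. The guiding observation is that the left-hand side of the hypothesis is, after clearing the factor $m$ in the denominator, precisely the lower bound that Lemma \ref{lem:onesidedbound} produces for $|N(S)|$ when $|S| = x+1$, while the right-hand side is exactly the largest value $|N(S)|$ can take if $S$ belongs to an incidence-free pair of size $x+1$. Recall that $\Gamma_{2,q}$ is a \vkl{}-graph with $(v,k,\lambda) = (q^2+q+1,\, q+1,\, 1)$.

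First I would unpack the definition of $\incfree$. Suppose, for contradiction, that $\incfree(\Gamma_{2,q}) \ge x+1$. Then there is a set $S \subseteq V_1(\Gamma_{2,q})$ with $\min\{|S|,\, |V_2 \setminus N(S)|\} \ge x+1$. Passing to a subset of $S$ of size exactly $x+1$ only shrinks $N(S)$, since $N(S') \subseteq N(S)$ for $S' \subseteq S$, so I may assume $|S| = x+1$ while retaining $|V_2 \setminus N(S)| \ge x+1$. The latter yields the counting upper bound
$$
|N(S)| \le v - (x+1) = q(q+1) - x.
$$

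Next I would apply Lemma \ref{lem:onesidedbound} to this $S$. With $\lambda = 1$, $k = q+1$ and $|S| - 1 = x$, the integer defined there is $m = \lfloor x/(q+1)\rfloor + 1$, matching the $m$ in the statement, and the lemma gives
$$
|N(S)| \ge (x+1)\cdot\frac{2(q+1)m - x}{m(m+1)} = \frac{x+1}{m+1}\left(2(q+1) - \frac{x}{m}\right).
$$
Chaining this with the counting bound above produces $\frac{x+1}{m+1}\left(2(q+1) - \frac{x}{m}\right) \le q(q+1) - x$, which directly contradicts the hypothesis. Hence no incidence-free pair of size $x+1$ can exist, so $\incfree(\Gamma_{2,q}) \le x$.

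In terms of difficulty, there is essentially no analytic obstacle: the entire content is the algebraic recognition that the two sides of the hypothesized inequality are exactly the isoperimetric lower bound of Lemma \ref{lem:onesidedbound} at size $x+1$ and the counting upper bound $v - (x+1)$, respectively. The only point requiring a little care is the reduction to $|S| = x+1$ exactly, where I must check that shrinking $S$ does not push $|V_2 \setminus N(S)|$ below $x+1$; this is immediate from the monotonicity $N(S') \subseteq N(S)$. I would also note in passing that the same argument works verbatim with $S$ replaced by a subset of $V_2(\Gamma_{2,q})$, by the symmetry of the two parts already exploited in Lemma \ref{lem:onesidedbound}.
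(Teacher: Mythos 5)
Your proof is correct and follows essentially the same route as the paper's: both apply Lemma \ref{lem:onesidedbound} with $(v,k,\lambda)=(q^2+q+1,q+1,1)$ to a set $S$ of size exactly $x+1$, observe that the hypothesis forces $|S|+|N(S)|>q^2+q+1$ so that $|V_2\setminus N(S)|<x+1$, and handle larger $S$ by passing to a subset of size $x+1$ via the monotonicity of $N$. Your write-up merely makes explicit (as a proof by contradiction) the algebraic identification and the subset reduction that the paper's terse proof leaves implicit.
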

	\begin{proof}
		Let $S \subseteq V_1(\Gamma_{2,q})$ be such that $|S| = x+1$. Invoking Lemma \ref{lem:onesidedbound} with parameters $(v,k,\lambda)=(q^2+q+1,q+1,1)$ yields
		$|S| + |N(S)| > q^2+q+1$.
		Moreover, if $|S| > x+1$, then we can simply consider any subset of $S$ of size $x+1$ and the same result follows.
		\qed
	\end{proof}
	
	By Lemma \ref{lem:ineq-x}, we obtain $\incfree(\Gamma_{2,8}) \le 16$, $\incfree(\Gamma_{2,9}) \le 21$, $\incfree(\Gamma_{2,11}) \le 28$, $\incfree(\Gamma_{2,13}) \le 36$ and $\incfree(\Gamma_{2,16}) \le 52$ immediately. As we will see shortly, all these bounds except the second one are sharp. 
	
To obtain the sharp upper bound for $\incfree(\Gamma_{2,9})$, we use the classification \cite{coolsaet2012complete} of 3-arcs in $\PG(2,9)$. By its definition, a (17; 3)-arc is a subset $S$ of $V_1(\Gamma_{2,9})$ with $|S|=17$ such that any vertex in $V_2(\Gamma_{2,9})$ is adjacent to at most three vertices in $S$.
	
	\begin{lemma}
		\label{lem:72}
		Let $S$ be a (17; 3)-arc of $\PG(2,9)$. Then $|N(S)| \geq 72$.
	\end{lemma}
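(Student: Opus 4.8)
The plan is to combine standard double counting with the known classification of $(17;3)$-arcs in $\PG(2,9)$. Write $T_i$ for the number of lines (vertices of $V_2(\Gamma_{2,9})$) meeting $S$ in exactly $i$ points, so that $|N(S)| = \sum_{i\ge 1} T_i$. Since $S$ is a $3$-arc we have $T_i = 0$ for $i \ge 4$. Using the parameters $(v,k,\lambda) = (91,10,1)$ and counting flags and collinear pairs exactly as in the proof of Lemma~\ref{lem:onesidedbound}, I would record the relations
\[
T_1 + 2T_2 + 3T_3 = k|S| = 170, \qquad 2T_2 + 6T_3 = \lambda|S|(|S|-1) = 272,
\]
from which $T_1 + T_2 = 34$, $T_1 = 3T_3 - 102$, and hence $|N(S)| = 34 + T_3$.

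The elementary constraint $T_1 \ge 0$ only yields $T_3 \ge 34$, i.e.\ $|N(S)| \ge 68$, so the entire difficulty is to sharpen this to $T_3 \ge 38$ (equivalently, to show that $S$ has at least $12$ tangent lines). This is precisely where the linear relations are exhausted: the polytope they cut out permits $T_3$ to be as small as $34$, so no amount of flag counting can close the gap of $4$, and the extra input must be geometric. Locally, through each point $P \in S$ the $10$ lines carry the remaining $16$ points of $S$ with at most $2$ per line, which forces the number $t_P$ of tangents at $P$ to lie in $\{0,1,2\}$ with $T_1 = \sum_{P\in S} t_P$; but bounding the number of ``all-secant'' points (those with $t_P = 0$) away from $17$ does not follow from this local count alone.

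To supply the missing constraint I would invoke the classification \cite{coolsaet2012complete}, which furnishes an explicit and finite list of the $(17;3)$-arcs of $\PG(2,9)$ up to collineation. For each representative one reads off (or computes directly) the line-type distribution $(T_0,T_1,T_2,T_3)$, and it suffices to check in each case that $T_3 \ge 38$, equivalently $|N(S)| \ge 72$. Since collineations preserve the number of tangents through each point, this verification is independent of the chosen representative and reduces to a finite inspection.

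The main obstacle is exactly the gap between the counting bound $68$ and the target $72$: the $(v,k,\lambda)$-structure together with the $3$-arc condition is genuinely insufficient, and one must exploit finer information about how $3$-arcs embed in $\PG(2,9)$. The classification is what makes this tractable; absent it, one would need a structural argument controlling the all-secant points, which appears to be the crux and is precisely what the extremal distribution $(T_0,T_1,T_2,T_3) = (19,12,22,38)$, attained when $|N(S)| = 72$, would have to certify.
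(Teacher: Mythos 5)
Your proposal is correct and takes essentially the same route as the paper: the paper likewise invokes the classification of \cite{coolsaet2012complete}, lists the four $(17;3)$-arcs of $\PG(2,9)$ up to isomorphism explicitly, and verifies by direct computation that $|N(S_i)|$ equals $72$, $73$, $73$ and $74$ respectively. Your preliminary double count, giving $|N(S)| = 34 + T_3$ and showing that flag counting alone only yields $|N(S)| \geq 68$ (with extremal distribution $(T_0,T_1,T_2,T_3)=(19,12,22,38)$ at the bound), is accurate and correctly diagnoses why the finite check against the classification is the indispensable ingredient, exactly as in the paper's proof.
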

	\begin{proof}
		In \cite{coolsaet2012complete} it is shown that there are only four (17; 3)-arcs in $\PG(2,9)$ up to isomorphism. These can be given as coordinates on the affine plane, where $i$ denotes an element satisfying $i^2+1=0$:
		\begin{itemize}
			\item $S_1 = \{(0,0),(0,1),(0,2),(1,0),(1,1),(1,2),(2,i),(2,i+1),(2,2i),(i,0),(i,i+2),(i+1,1),(2i,2),(2i,i+1),(2i+2,i),(2i+2,i+1),(2i+2,2i)\}$
			\item $S_2 = \{(0,0),(0,1),(0,2),(1,0),(1,1),(1,2),(2,i),(2,i+1),(2,i+2),(i,0),(i,1),(i,2i+2),(i+1,2i+2),(i+2,2),(2i,i+1),(2i,2i),(2i,2i+2)\}$
			\item $S_3 = \{(0,0),(0,1),(0,2),(1,0),(1,1),(1,2),(2,i),(2,i+1),(2,2i+2),(i,0),(i,i),(i,2i+2),(i+1,i+2),(i+2,2),(i+2,i+1),(i+2,2i+1),(2i,1)\}$
			\item $S_4 = \{(0,0),(0,1),(0,2),(1,0),(1,1),(1,i),(2,0),(2,i),(2,i+1),(i,i),(i+1,2i+2),(2i,i+1),(2i+1,2),(2i+1,2i+2),(2i+2,2),(2i+2,i+1),(2i+2,2i+1)\}$
		\end{itemize}
		It is straightforward to check that $|N(S_i)|$ evaluates to 72, 73, 73 and 74 respectively.
		\qed
	\end{proof}
	
	\begin{lemma}
		$\incfree(\Gamma_{2,9}) \leq 19$.
	\end{lemma}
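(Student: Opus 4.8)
The plan is to recast the statement as a neighbourhood bound and then reduce everything to the finite classification already recorded in Lemma~\ref{lem:72}. Since $|V_2(\Gamma_{2,9})| = q^2+q+1 = 91$ and $\incfree(\Gamma_{2,9}) = \max_{S}\min\{|S|,\,91-|N(S)|\}$, proving $\incfree(\Gamma_{2,9}) \le 19$ is equivalent to showing that every $S \subseteq V_1(\Gamma_{2,9})$ with $|S| \ge 20$ satisfies $|N(S)| \ge 72$. By monotonicity of $N(\cdot)$ under inclusion it suffices to treat $|S| = 20$: a larger set contains a $20$-element subset, and passing to a subset only shrinks the neighbourhood. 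So I would assume, for contradiction, that there is a set $S$ with $|S| = 20$ and $|N(S)| \le 71$.

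Next I would run the standard line count. Writing $t_i$ for the number of lines meeting $S$ in exactly $i$ points and using $(v,k,\lambda) = (91,10,1)$, double counting of edges and of paths of length two gives $\sum_i t_i = |N(S)|$, $\sum_i i\,t_i = 200$ and $\sum_i i(i-1)\,t_i = 380$. From the identity $(i-2)(i-3) = i(i-1) - 4i + 6$ these combine into
\[
\sum_i (i-2)(i-3)\,t_i = 6|N(S)| - 420,
\]
which is exactly the instance $m=2$ of Lemma~\ref{lem:onesidedbound} and already forces $|N(S)| \ge 70$. The crucial point is that the same quantity measures how far $S$ is from being an arc: since $i-3 \le \tfrac12(i-2)(i-3)$ for every $i \ge 4$, the number of points one must delete from $S$ to destroy all lines meeting it in four or more points is at most
\[
\sum_{i \ge 4}(i-3)\,t_i \le \tfrac12\sum_i (i-2)(i-3)\,t_i = 3|N(S)| - 210 \le 3,
\]
where the last inequality uses the assumption $|N(S)| \le 71$.

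Deleting such a set of at most three points leaves a subset $\hat S \subseteq S$ with $|\hat S| \ge 17$ in which no line contains four or more points. Any $17$-element subset $A \subseteq \hat S$ is then a $(17;3)$-arc, so Lemma~\ref{lem:72} gives $|N(A)| \ge 72$; but $A \subseteq S$ forces $N(A) \subseteq N(S)$ and hence $|N(A)| \le |N(S)| \le 71$, a contradiction. This rules out $|N(S)| \le 71$ and yields $\incfree(\Gamma_{2,9}) \le 19$. I expect the main obstacle to be the middle step: the bare bound of Lemma~\ref{lem:onesidedbound} gives only $|N(S)| \ge 70$, two short of the target, so the entire argument hinges on converting the slack $6|N(S)| - 420$ into a guarantee that at most three deletions suffice to reach a $3$-arc, after which the classification in Lemma~\ref{lem:72} closes the gap.
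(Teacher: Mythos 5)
Your proof is correct and follows essentially the same route as the paper: the same assumption $|S|=20$, $|N(S)|\le 71$, the same three line counts for $(v,k,\lambda)=(91,10,1)$ giving $\sum_i (i-2)(i-3)T_i = 6|N(S)|-420 \le 6$, and the same conclusion by deleting at most three points to reach a $(17;3)$-arc and contradicting Lemma~\ref{lem:72}. Your only deviation is cosmetic (and slightly cleaner): where the paper splits into the cases ($T_4\le 3$ and $T_5=0$) or ($T_4=0$ and $T_5=1$), you bound the number of required deletions uniformly by $\sum_{i\ge 4}(i-3)T_i \le \tfrac{1}{2}\sum_i (i-2)(i-3)T_i \le 3$, and you also spell out the monotonicity details ($|S|\ge 20$ reduces to $|S|=20$, and $N(A)\subseteq N(S)$) that the paper leaves implicit.
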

	\begin{proof}
		Suppose otherwise. Then there exists $S \in V_1(\Gamma_{2,q})$ such that $|S| = 20$ and $|N(S)| \leq 71$. Let $T_i$ denote the number of lines of $\PG(2, q)$ that are incident to exactly $i$ points in $S$. Using the same notation and technique as in the proof of Lemma \ref{lem:onesidedbound}, we obtain
		\begin{align*}
		\sum_{i=1}^{10}(i-2)(i-3)T_i
		&\leq \sum_{i=1}^{10}\left( i(i-1) - 4i + 6\right)T_i\\
		&\leq 20 \cdot (20-1)-4 \cdot 20 \cdot 10 +6 \cdot 71\\
		&=6,
		\end{align*}
		which implies that $(T_4 \leq 3 \mbox{ and } T_5 = 0)$ or $(T_4 = 0 \mbox{ and }  T_5 = 1)$ and $T_i = 0$ for all $i \geq 6$. Removing these (at most) three points from $S$ gives us a new (17; 3)-arc $S'$ with $|N(S)'| \leq 71$, contradicting Lemma \ref{lem:72}.
		\qed
	\end{proof}

To prove the values of $\incfree(\Gamma_{2, q})$ for $q \in \{8, 9, 11, 13, 16\}$ as shown in Table \ref{tab:small}, it suffices to show that $\incfree(\Gamma_{2,8}) \ge 16$, $\incfree(\Gamma_{2,9}) \ge 19$, $\incfree(\Gamma_{2,11}) \ge 28$, $\incfree(\Gamma_{2,13}) \ge 36$ and $\incfree(\Gamma_{2,16}) \ge 52$. When $q = 8$ we can use the construction in Lemma \ref{lem:pq}(e) to obtain $\incfree(\Gamma_{2,8}) \ge 16$. When $q = 16$ we can construct a maximal arc \cite{denniston1969some} to obtain $\incfree(\Gamma_{2,16}) \ge 52$. When $q \in \{9,11,13\}$, the following subset of $V_1(\Gamma_{2,q})$ yields $\incfree(\Gamma_{2,9}) \ge 19$, $\incfree(\Gamma_{2,11}) \ge 28$ and $\incfree(\Gamma_{2,13}) \ge 36$ respectively, where the elements are given as coordinates on the affine plane.
	
	$q=9$: $\{(0,0), (0,1), (0,i), (0,i+1), (1,0), (1,1), (1,i+1), (2,i), (2,2i+1), (i,i), (i,2i+2), (i+1,0), (i+1,i+2), (i+1,2i+2), (2i,i+2), (2i,2i+1), (2i,2i+2), (2i+2,1), (2i+2,2i+1)\}$, where $i$  denotes an element satisfying $i^2+1=0$.
	
	$q=11$: \{(0,0), (0,8), (0,10), (1,3), (1,7), (1,8), (2,5), (2,7), (2,10), (3,3), (3,5), (3,7),\\ (4,0), (4,8), (4,10), (5,1), (5,4), (5,9), (7,0), (7,4), (7,10), (9,1), (9,3), (9,9), (10,1), (10,4), (10,5), (10,9)\}.
	
	$q=13$: \{(0,0), (0,3), (0,6), (0,10), (1,1), (1,4), (1,10), (3,0), (3,5), (3,10), (3,11), (4,1), (4,2), (4,6), (4,7), (6,2), (6,3), (6,5), (6,12), (7,1), (7,3), (7,6), (7,12), (8,0), (8,6), (8,7), (8,11), (9,2), (9,4), (9,7), (10,4), (10,5), (10,12), (11,3), (11,4), (11,11)\}.

So far we have determined the values of $\incfree(\Gamma_{2,q})$ for $q \in \{8, 9, 11, 13, 16\}$. Combining these with the values of $\incfree(\Gamma_{2,q})$ for $q \in \{2,3,4,5,7\}$ given in \cite{ure1996study}, we obtain the results in the second column of Table \ref{tab:small}. In light of (\ref{eq:incfreeisop}), these give us upper bounds for $\isop(\Gamma_{2,q})$ as needed in the third column of Table \ref{tab:small}. The matching lower bound for $\isop(\Gamma_{2,q})$ seems to be difficult to obtain analytically, and so we run a program to achieve this. Since testing all subsets takes exponential time, we weaken some of the constraints and give a polynomial time program for the relaxed problem. 
	
	Define $h:[0,v]\rightarrow[0,2v]$ by
	$$h(x) = \frac{(q+1)^2 x}{q+x}+x$$
	and $g:\{0,1,\ldots,v\}\rightarrow\{0,1,\ldots,v\}$ by
	$$g(x) = \begin{cases}
	0 & x = 0\\
	\frac{x(2m(x)(q+1)-x+1)}{m(x)(m(x)+1)} & x \neq 0,
	\end{cases}$$
	where $m(x)=\left\lfloor\frac{q+x}{q+1}\right\rfloor$. 
	
	\begin{thm}
		\label{thm:prog}
		Let $q$ be a prime power and $v=q^2+q+1$. Then the optimal value of the following program is a lower bound for $\isop(\Gamma_{2,q})$. Furthermore, this problem can be solved in polynomial time with respect to $q$.
		\begin{align} 
                 \nonumber
		\text{minimize \quad }   & \frac{c+d}{a+b}  \\
		\text{subject to \quad } & a,b,c,d,e,f \in \{0,1,\ldots,v\} \label{1} \\ 
		& a+c+e=b+d+f=v \label{2} \\
		& \frac{h(a+b-h^{-1}(a+b))}{a+b} - 1 \le 1 - \frac{\incfree(\Gamma_{2,q})}{v} \label{3} \\
		& a \leq b \label{4} \\
		& b+d \ge g(a) \label{5} \\
		& a+c \ge g(b) \label{6} \\
		& c+e \ge g(f) \label{7} \\
		& \text{If } e\ge 1 \text{ then } qd \ge a \label{8} \\
		& \text{If } e=2 \text{ then } q(d+1) \ge 2(a+1) \label{9} \\
		& \text{If } q=5 \text{ and }a=9 \text{ then } b+d \ge 25 & \label{10} \\
		& \text{If } a>\incfree(\Gamma_{2,p}) \text{ then } b+d \ge v-\incfree(\Gamma_{2,p}) \label{11}
		\end{align}
 	\end{thm}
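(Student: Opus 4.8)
The plan is to prove that the program is a \emph{relaxation} of the exact computation of $\isop(\Gamma_{2,q})$: every set attaining the minimum in the definition of $\isop$ induces a feasible point of the program with the same objective value, so the program's optimum can only be smaller, hence lower-bounds $\isop(\Gamma_{2,q})$. Concretely, let $X$ achieve $\isop(\Gamma_{2,q}) = |N(X)|/|X|$ with $|X| \le v$, and set $S = X \cap V_1(\Gamma_{2,q})$, $T = X \cap V_2(\Gamma_{2,q})$. Using the polarity of $\PG(2,q)$, which induces an automorphism of $\Gamma_{2,q}$ interchanging the two sides, I may assume $|S| \le |T|$. I then read off the six cell sizes of the partition induced by $X$: put $a = |S|$, $b = |T|$; let $c = |N(T) \setminus S|$ (the vertices of $V_1 \setminus S$ adjacent to $T$) and $e$ the number of remaining vertices of $V_1$; symmetrically let $d = |N(S) \setminus T|$ and $f$ the number of remaining vertices of $V_2$. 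Then \eqref{1}, \eqref{2} and \eqref{4} hold by construction, and since $|N(X)| = c + d$ and $|X| = a + b$ the objective equals $|N(X)|/|X| = \isop(\Gamma_{2,q})$. It remains to check that this point satisfies \eqref{3} and \eqref{5}--\eqref{11}; each of these is a \emph{necessary} condition on the cell sizes, which is precisely what makes the feasible region a relaxation.

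The expansion constraints \eqref{5}, \eqref{6}, \eqref{7} come straight from Lemma \ref{lem:onesidedbound}. Specialising that lemma to $(v,k,\lambda) = (q^2+q+1,\,q+1,\,1)$, one sees that $g$ is exactly the resulting bound $|N(\cdot)| \ge g(|\cdot|)$, with $m(x)$ matching the floor in the lemma. Since $N(S) \subseteq T \cup (N(S) \setminus T)$ lies in the two $V_2$-cells of sizes $b$ and $d$, we get $b + d \ge |N(S)| \ge g(a)$, which is \eqref{5}; \eqref{6} is its mirror for $T$; and \eqref{7} holds because the $f$ vertices of $V_2$ lying outside $T$ and non-adjacent to $S$ have no neighbour in $S$, so their neighbourhood sits inside the two $V_1$-cells of sizes $c$ and $e$, giving $c + e \ge g(f)$. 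For \eqref{3} I note that $h$ is exactly the function $f$ of \eqref{eq:f} for these parameters; Theorem \ref{thm:f} gives $|N(X)|/|X| \ge h(|X| - h^{-1}(|X|))/|X| - 1$, while \eqref{eq:incfreeisop} gives $\isop(\Gamma_{2,q}) \le 1 - \incfree(\Gamma_{2,q})/v$, and chaining the two through $|N(X)|/|X| = \isop(\Gamma_{2,q})$ yields \eqref{3}.

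The substantive constraints are \eqref{8}, \eqref{9}, \eqref{11}, and the exceptional \eqref{10}. For \eqref{8}, if $e \ge 1$ pick $u \in V_1 \setminus S$ with no neighbour in $T$; since $\lambda = 1$, each $s \in S$ has a unique common neighbour $w(s)$ with $u$, and $w(s) \in N(S) \setminus T$ (it cannot lie in $T$, else $u$ would have a neighbour there), so $w$ maps $S$ into the $d$-cell with fibres of size at most $q$ (a vertex of that cell in the image is adjacent to $u \notin S$, hence to at most $q$ vertices of $S$); thus $a \le qd$. For \eqref{9}, with $e = 2$ I run the same count from two vertices $u_1, u_2$; the extra input is that, again by $\lambda = 1$, $u_1$ and $u_2$ share a \emph{unique} common neighbour, so at most one vertex of the $d$-cell is adjacent to both, and a short case analysis on whether that vertex lies in $N(S)$ upgrades the estimate to $q(d+1) \ge 2(a+1)$. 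Constraint \eqref{11} encodes the incidence-free bound: if $|S|$ exceeds the relevant incidence-free number, then $\min\{|S|,\,|V_2 \setminus N(S)|\} \le \incfree$ forces $|V_2 \setminus N(S)| \le \incfree$, i.e.\ $|N(S)| \ge v - \incfree$ and hence $b + d \ge v - \incfree$. Finally \eqref{10} is an ad hoc strengthening: for $q = 5$ and $a = 9$ the generic value $g(9) = 24$ falls one short, and $b + d \ge 25$ must be justified by a tailored (computational) argument showing that no $9$ points of $\PG(2,5)$ leave as many as $7$ external lines.

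Since all six variables lie in $\{0,\ldots,v\}$ with $v = O(q^2)$ and the two equalities in \eqref{2} eliminate $e$ and $f$, the feasible region contains $O(v^4) = O(q^8)$ lattice points; as $h$, $h^{-1}$, $g$ and every constraint are evaluable in time polynomial in $q$, a brute-force search (or, better, fixing $(a,b)$ and minimising $c + d$ over the remaining constraints) solves the program in polynomial time. I expect the main obstacle to be the refined inequalities \eqref{8}, \eqref{9} and especially the exceptional \eqref{10}: these go beyond Lemma \ref{lem:onesidedbound}, relying on the uniqueness of common neighbours ($\lambda = 1$) and, for $q = 5$, on structure-specific counting of secants. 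A further delicate point is verifying that \eqref{11} remains a valid condition on the optimal configuration when it is phrased via $\incfree(\Gamma_{2,p})$ rather than $\incfree(\Gamma_{2,q})$; this is immediate when $q = p$ is prime but needs separate care for proper prime powers. By contrast, once every constraint is shown to be necessary, the relaxation argument and the polynomial-time claim are routine.
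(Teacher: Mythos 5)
Your proposal is correct and follows essentially the same route as the paper: the paper likewise takes an optimal set $X$, partitions $V(\Gamma_{2,q})$ into the six cells $A,B,C,D,E,F$ (using the self-duality of $\PG(2,q)$ for $a\le b$), verifies \eqref{3} and \eqref{5}--\eqref{7} via Theorem \ref{thm:f}, \eqref{eq:incfreeisop} and the bound of Lemma \ref{lem:onesidedbound}, proves \eqref{8} and \eqref{9} by exactly your $\lambda=1$ unique-line counts (the paper's count gives $2(q-1)+q(d-1)\ge 2a$, equivalent to your $q(d+1)\ge 2(a+1)$), handles \eqref{10} by citing \cite[Section 4.3.1]{ure1996study} rather than proving it, and gets polynomial time by brute-force enumeration. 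Your two flagged concerns resolve as you suspected: \eqref{10} rests solely on that external reference, and the $p$ in \eqref{11} is evidently a typo for $q$ (the paper's MAGMA code uses $\incfree(\Gamma_{2,q})$), for which your argument from the definition of $\incfree$ and $N(A)\subseteq B\cup D$ is precisely the paper's.
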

	
	\begin{proof}
		Let $S \subset V(\Gamma_{2,q})$ be such that $|S| \le v$ and $|N(S)|/|S| = \isop(\Gamma_{2,q})$. Let $A = S \cap V_1, B = S \cap V_2, C = N(S) \cap V_1, D = N(S) \cap V_2, E = V_1 \setminus (S \cup N(S))$ and $F = V_2 \setminus (S \cup N(S))$. Without loss of generality we may assume that $|A| \le |B|$. Let $a,b,c,d,e,f$ be the cardinalities of $A,B,C,D,E,F$ respectively. It suffices to check that all the eleven conditions are satisfied.
		
		Conditions \eqref{1} and \eqref{2} are trivially true. Condition \eqref{3} follows from (\ref{eq:incfreeisop}) and Theorem \ref{thm:f}. Condition \eqref{4} follows from our assumption that $|A| \le |B|$. Condition \eqref{5} follows from Lemma \ref{lem:ineq-x} and the fact that $N(A) \subseteq B \cup D$, and conditions \eqref{6} and \eqref{7} are similar.
		
		Note that if $e \ge 1$ then there is a common line in $D$ for each pair of points in $A \times E$. Each such line in $D$ then contains at most $q$ points in $A$. So $qd \geq a$ and condition \eqref{8} follows.
		
		Condition \eqref{9} uses a similar combinatorial argument, but we also take into account the fact that the two points in $E$ can have at most one line in $D$ joining them. By counting the number of 2-arcs from $A$ to $E$ (keeping in mind that all other lines in $D$ contain at most one point in $E$), we obtain the inequality $2(q-1)+q(d-1) \geq 2a$.
		
		Condition \eqref{10} is covered in \cite[Section 4.3.1]{ure1996study}. Finally, condition \eqref{11} follows from the definition of $\incfree$ and the fact that $N(A) \subseteq B \cup D$.
		
		We obtain the optimal value in polynomial time (with respect to $q$) by enumerating all $(q^2+q+1)^6$ combinations of $a,b,c,d,e,f$.
		\qed
	\end{proof}
	
	By running the program in Theorem \ref{thm:prog} for each prime power $q \le 16$ (see the appendix for the MAGMA code), we obtain a lower bound for $\isop(\Gamma_{2,q})$, which turns out to be exactly the same as the upper bound obtained from $\incfree(\Gamma_{2,q})$ via (\ref{eq:incfreeisop}). Therefore, for such $q$ the third column of Table \ref{tab:small} gives the exact values of $\isop(\Gamma_{2,q})$ and \eqref{eq:16} holds. This completes the proof of Theorem \ref{thm:16}.

	\appendix
	\section{MAGMA Code}
	\lstset{frame=tlrb,xleftmargin=\fboxsep,xrightmargin=-\fboxsep}
	The following code solves the program in Theorem \ref{thm:prog} by brute forcing through the entire sample space:
	\begin{lstlisting}
	for tup in [<2,2>, <3,3>, <4,6>, <5,7>, <7,13>,
	<8,16>, <9,19>, <11,28>, <13,36>, <16,52>] do
	q := tup[1];
	alph := tup[2];
	v := q^2 + q + 1;
	upperbound := 1 - alph/v;
	
	f := func<x | (q+1)^2 * x / (q + x) + x>;
	finv := func<x | (-b+Sqrt(b^2+4*q*x))/2
	where b is q^2+3*q+1-x>;
	sizes := {x : x in {1..v} |
	f(x-finv(x))/x - 1 le upperbound};
	
	g := func<x | x eq 0 select 0 else
	Ceiling(x*(2*M*(q+1)-x+1)/(M^2+M))
	where M is Floor((q + x) / (q + 1))>;
	
	"q =", q, ": i_v is in [",
	Min({(c+d)/(a+b) :
	c in {0..v-a},
	d in {0..v-b},
	a in {0..b} meet {x-b : x in sizes},
	b in {1..v} |
	(b + d ge g(a))
	and (a + c ge g(b))
	and (c + e ge g(f))
	and ((a le alph) or (b + d ge v - alph))
	and ((q ne 5) or (a ne 9) or (b + d ge 25))
	and ((e eq 0) or (d * q ge a))
	and ((e ne 2) or (d * q + q - 2 ge 2 * a))
	where e is v - a - c
	where f is v - b - d}),
	",", upperbound, "]";
	end for;
	\end{lstlisting}
 	
\medskip

\textbf{Acknowledgement}~~S. Zhou was supported by a Future Fellowship (FT110100629) of the Australian Research Council.  
	
	\bibliographystyle{plain}
	\bibliography{References.bbl}
\end{document}